\newtheorem{theorem}{Theorem}[section]
\newtheorem{proposition}[theorem]{Proposition}
\newtheorem{lemma}[theorem]{Lemma}
\newtheorem{corollary}[theorem]{Corollary}
\newtheorem{question}[theorem]{Question}
\newtheorem{definition}[theorem]{Definition}
\theoremstyle{plain}
\theoremstyle{remark}
\newtheorem{remark}[theorem]{Remark}
\newtheorem{assume}[theorem]{Assumption}
\newcommand{\tm}{\tilde{m}}
\newcommand{\tn}{\tilde{n}}
\newcommand{\C}{{\mathbb C}}
\newcommand{\Q}{{\mathbb Q}}
\newcommand{\R}{{\mathbb R}}
\newcommand{\Z}{{\mathbb Z}}
\newcommand{\N}{{\mathbb N}}
\newcommand{\cC}{{\mathcal C}}
\newcommand{\cA}{{\mathcal A}}
\newcommand{\fq}{\mathfrak q}
\newcommand{\fr}{\mathfrak r}
\newcommand{\Qbar}{\bar{\Q}}
\DeclareMathOperator{\Gal}{Gal}
\DeclareMathOperator{\Norm}{N}
\DeclareMathOperator{\GL}{GL}
\DeclareMathOperator{\id}{id}
\newcommand{\bP}{{\mathbb P}}
\newcommand{\bfx}{{\mathbf x}}
\newcommand{\cO}{\mathcal{O}}
\newcommand{\cB}{\mathcal{B}}
\DeclareMathOperator{\disc}{disc}
\author{Khoa D.~Nguyen}
\address{
Khoa D.~Nguyen \\
Department of Mathematics\\
University of British Columbia\\
And Pacific Institute for The Mathematical Sciences\\ 
Vancouver, BC V6T 1Z2, Canada}
\email{dknguyen@math.ubc.ca}
\urladdr{www.math.ubc.ca/\~{}dknguyen}
\keywords{unit equations over finitely generated domains, uniform bounds, effective methods}
\subjclass[2010]{Primary: 11D61; Secondary: 11R99}
\begin{document}
	\title{On Modules of Integral Elements Over Finitely Generated Domains}
	\date{Apr 20, 2015}
	\begin{abstract}		 
		This paper is motivated by the results and questions of Jason P.~Bell and Kevin G.~Hare in \cite{BellHare}. Let $\cO$ be a finitely generated $\Z$-algebra
		that is an integrally closed domain of characteristic zero. 
		We investigate  
    the following two problems:
		\begin{itemize}
			\item [(A)] Fix $q$ and $r$ that are integral over $\cO$,
			describe all pairs $(m,n)\in\N^2$ such that $\cO[q^m]=\cO[r^n]$.  
			\item [(B)] Fix $r$ that is integral over $\cO$, describe all $q$
			such that $\cO[q]=\cO[r]$.
		\end{itemize}	
		 In this paper, we solve Problem (A), present a solution of Problem (B) by
		 Evertse and Gy\H{o}ry, and explain their relation to the paper
		 of Bell and Hare.
		 In the following, $c_1$ and $c_2$
    are effectively computable constants with
    a very mild dependence on $\cO$, $q$, and $r$. For (B), Evertse and Gy\H{o}ry 
     show that there are $N\leq c_2$ elements $s_1,\ldots,s_N$
    such that $\cO[s_i]=\cO[r]$ for every $i$, and for every $q$ such that
    $\cO[q]=\cO[r]$, we have $q-us_i\in\cO$ for some $1\leq i\leq N$
    and $u\in\cO^{*}$. This immediately answers two questions about Pisot
    numbers by Bell and Hare \cite{BellHare}. For (A), we show that except for some 
    ``degenerate'' cases that can be explicitly described, there are at most 
    $c_1$ such pairs $(m,n)$. This significantly strengthens some results
    in \cite{BellHare}. We also make some remarks on effectiveness and discuss
    further questions at the end of the paper.
	\end{abstract}
	
	\maketitle
	
	\section{Introduction} \label{sec:intro}
	Throughout this paper, $\N$ denotes the set of positive integers. For simplicity, 
	the terminology \emph{finitely generated domain} means 
	an integral domain of characteristic 0 finitely generated as an algebra over $\Z$.
	Fix 
	an embedding $\Qbar\subset \C$. A Pisot number is a real algebraic
		integer greater than 1 whose other conjugates are of modulus less than 1. In 
	\cite{BellHare}, an algebraic integer $q$ of degree $d\geq 2$ over
	$\Q$ is of full rank   if the multiplicative group
	of $\C^*$ generated by the conjugates of $q$ either has rank $d$, 
	or has rank $d-1$ and the norm of $q$ is $\pm 1$. 
	The following very interesting results are
	established in \cite{BellHare} (also see \cite{BellHare-Corrigendum}):
	\begin{itemize}
		\item [(i)] Fix an algebraic integer $q$ of full rank and positive integer $n$, the set of $m\in\N$ such that
		$\Z[q^m]=\Z[q^n]$ is finite (\cite[Theorem 1.1]{BellHare}).
		\item [(ii)] Let $q$ and $r$ be full rank
		algebraic integers of degree $d\geq 2$. Then except certain explicit ``degenerate'' cases,
		the set of $m\in\N$ such that $\Z[q^m]=\Z[r^m]$ is finite (\cite[Theorem 1.3]{BellHare}).
		\item [(iii)] Fix an algebraic integer $r$ such that $\Q(r)/\Q$ is Galois, there are
		only finitely many Pisot numbers $q$ such that $\Z[q]=\Z[r]$ (\cite[Theorem 1.6]{BellHare}).
	\end{itemize}
	Bell and Hare also ask two questions involving part (iii): is it possible to give a bound depending on $r$ and to remove 
	the assumption 
	on $\Q(r)/\Q$? 
	
	\emph{From now on,
	$\cO$ denotes
	an integrally closed finitely generated domain with fraction field $K$}. The typical and most important examples are rings of
	integers in number fields. In our main result, we fix $q$ and $r$ that are
	integral over $\cO$ such that \emph{$q^n$ and $r^n$ are not in $\cO$
	for every $n\in\N$}, and study the equation $\cO[q^m]=\cO[r^n]$
	in \emph{both} variables $(m,n)$. Our result significantly strengthen
	the above results in (i) and (ii) of Bell and Hare \cite{BellHare} 
	at one stroke. When $\cO=\Z$ as in \cite{BellHare}, it is
	obvious that the condition of being full rank implies
	the very mild condition $\{q^n,r^n:\ n\in\N\}\cap \cO=\emptyset$.
	This latter condition is assumed in order to simplify
	the statements of our results stated in this section. It comes
	from the minor inconvenience that
	$\cO[t]=\cO$ for every $t\in\cO$ no matter how large
	the ``height'' of $t$ is. We will also explain how our arguments
	could handle the case when some $q^n$ or $r^n$ is in $\cO$ (see Section~\ref{sec:add}), hence provide
	a complete (in a certain \emph{qualitative} sense) solution
	to the problem of describing
	$(m,n)$ such that $\cO[q^m]=\cO[r^n]$ even without
	the above condition on $q$ and $r$. A remarkable feature
	of our result is that it provides a uniform bound with
	a very mild dependence on the data $(\cO,q,r)$ illustrated
	below (see Remark~\ref{rem:Dirichlet}).
	
	A theorem of Roquette \cite{Roquette} (also see \cite[Chapter~2]{LangFundamentals}) states that the group
	of units in a finitely generated domain 
	is finitely generated. Hence $\cO^*$ has only finitely many torsion points.
	In other words, there are only finitely many roots of unity
	in $K$.   
	 We
	need the following:
	\begin{definition}\label{def:unit over}
	Let  $\alpha$ be integral over $\cO$. We say that $\alpha$ is a unit over $\cO$ if
	$\Norm_{K(\alpha)/K}(\alpha)\in \cO^{*}$, where $\Norm_{K(\alpha)/K}$
	is the norm map with respect to $K(\alpha)/K$. By using
	the minimal polynomial of $\alpha$ over $K$, this is equivalent to
	requiring that $\alpha$ is a unit in $\cO[\alpha]$.
	\end{definition}
	
	\begin{definition}\label{def:dAab}
	Let $\alpha$ and $\beta$ be integral over $\cO$. The notation $d(\cO,\alpha,\beta)$
	denotes the maximum of all the following numbers:
	\begin{itemize}
		\item [(a)] $[K(\alpha):K]$ and $[K(\beta):K]$.
		\item [(b)] The rank of the group of units of $\cO[\sigma(\alpha),\sigma(\beta),\tau(\alpha),\tau(\beta)]$
		for any two $K$-embeddings $\sigma$ and $\tau$ of $K(\alpha,\beta)$ into 
		$\bar{K}$.
		\item [(c)] The number of roots of unity in $K$.
	\end{itemize} 
	\end{definition}

  Although the definition of $d(\cO,\alpha,\beta)$	looks somewhat complicated, we have 
  the
  following simple observation:
  \begin{remark}\label{rem:Dirichlet}
   If $K$ is a number field, $S$ is a finite set of places of $K$ containing all the 
   archimedean ones, $\cO$ is the ring of $S$-integers of $K$,  and $\alpha$ and $\beta$
   are integral over $\cO$, then $d(\cO,\alpha,\beta)$ could be
   bounded explicitly in terms of $\max\{[K(\alpha):\Q],[K(\beta):\Q]\}$
   and the cardinality of $S$. This follow from (the $S$-unit version of) 
   Dirichlet's unit theorem.
  \end{remark}
 
	Before stating our main result, we need to define subsets of $\N^2$
	corresponding to certain ``degenerate'' cases (this name comes from degenerate
	solutions of certain unit equations considered later). Let $q$ and $r$ be 
	integral over $\cO$ such that $\{q^n,r^n:\ n\in\N\}\cap \cO=\emptyset$.  Define
	the subsets $\cA_{\cO,q,r}$, $\cB_{\cO,q,r}$, $\cC_{\cO,q,r}$
	as follows:	

	$$\cA_{\cO,q,r}:=\{(m,n)\in\N^2:\ \frac{q^m}{r^n}\in\cO^{*}\},$$
   $$\cB_{\cO,q,r}:=\{(m,n)\in\N^2:\ [K(r^n):K]=2\ \text{and 
   }\frac{q^m}{\sigma(r^n)}\in\cO^{*}\}$$
   where $\sigma$ in the definition of $\cB_{\cO,q,r}$ is
   the nontrivial automorphism of the quadratic extension
   $K(r^n)/K$. Finally, if $q$ and $r$ are units over $\cO$, we define:
   $$\cC_{\cO,q,r}:=\{(m,n)\in\N^2:\ q^mr^n\in\cO^{*}\};$$
   otherwise define $\cC_{\cO,q,r}=\emptyset$. By
   our assumption on $q$ and $r$, we have $\cA_{\cO,q,r}\cap(\cB_{\cO,q,r}\cup\cC_{\cO,q,r})=\emptyset$. On the other hand, when
   $r$ is a unit over $\cO$, we have $\cB_{\cO,q,r}\subseteq\cC_{\cO,q,r}$.

	Obviously, if $(m,n)\in \cA_{\cO,q,r}$ then $\cO[q^m]=\cO[r^n]$. If $(m,n)\in \cB_{\cO,q,r}$, note that $[K(r^n):K]=2$ and let $\sigma$ denote
	the nontrivial $K$-automorphism of $K(r^n)$. Using the fact that $r^n+\sigma(r^n)\in \cO$, we have that $\cO[q^m]=\cO[\sigma(r^n)]=\cO[r^n]$. Finally,
	if $(m,n)\in \cC_{\cO,q,r}$, put $u=q^mr^n\in\cO$. Using the minimal
	polynomials of $r^n$ and $q^m$ over $K$, we have
	$\displaystyle q^m=\frac{u}{r^n}\in \cO[r^n]$
	and $\displaystyle r^n=\frac{u}{q^m}\in \cO[q^m]$ hence $\cO[q^m]=\cO[r^n]$. Because of this, the 
	following result is, in a certain qualitative sense, best possible:
	\begin{theorem}\label{thm:main 2}
		Let $q$ and $r$ be integral over $\cO$ such
		that $\{q^n,r^n:\ n\in\N\}\cap \cO=\emptyset$. There
		is an effectively computable constant $c_3$ depending only on
		$d(\cO,q,r)$
		 such that outside
		$\cA_{\cO,q,r}\cup\cB_{\cO,q,r}\cup\cC_{\cO,q,r}$
		there are at most $c_3$ pairs $(m,n)\in\N^2$
		satisfying $\cO[q^m]=\cO[r^n]$. 
	\end{theorem}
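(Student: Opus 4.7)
The plan is to reduce the relation $\cO[q^m]=\cO[r^n]$ to a four-term $S$-unit equation and then invoke the Evertse-Gy\H{o}ry theorem on unit equations over finitely generated domains. Because $\cO$ is integrally closed in $K$, the hypothesis $\{q^n,r^n:n\in\N\}\cap\cO=\emptyset$ forces $q^m,r^n\notin K$, so $[K(q^m):K]\geq 2$ and $[K(r^n):K]\geq 2$ for every $(m,n)\in\N^2$. Assuming $\cO[q^m]=\cO[r^n]$, set $L:=K(q^m)=K(r^n)$ and choose $f,g\in\cO[X]$ with $r^n=f(q^m)$ and $q^m=g(r^n)$. For any two $K$-embeddings $\sigma,\tau$ of $L$ into $\bar K$ with $\sigma(q^m)\neq\tau(q^m)$, the Taylor-type identities $f(X)-f(Y)=(X-Y)\widetilde f(X,Y)$ and $g(X)-g(Y)=(X-Y)\widetilde g(X,Y)$ with $\widetilde f,\widetilde g\in\cO[X,Y]$ show that
\[
u_{\sigma,\tau}\;:=\;\frac{\sigma(q^m)-\tau(q^m)}{\sigma(r^n)-\tau(r^n)}
\]
is a unit in $R_{\sigma,\tau}:=\cO[\sigma(q),\tau(q),\sigma(r),\tau(r)]$, with inverse $\widetilde f(\sigma(q^m),\tau(q^m))$.

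Next, I would enlarge $\cO$ by inverting the finitely many primes dividing $\bNorm_{K(q)/K}(q)\cdot\bNorm_{K(r)/K}(r)$, obtaining a ring $\cO'$ in which $q,r$ become units and whose unit-group rank is still controlled by $d(\cO,q,r)$ via Definition~\ref{def:dAab}(b). Setting $A=\sigma(q)$, $B=\tau(q)$, $C=\sigma(r)$, $D=\tau(r)$, all units in the resulting ring $R'$, the identity becomes the four-term $S$-unit equation
\[
A^m-B^m-u_{\sigma,\tau}C^n+u_{\sigma,\tau}D^n=0.
\]
By the Evertse-Gy\H{o}ry theorem, the number of non-degenerate solutions of a four-term vanishing sum with entries in $(R')^{*}$ is bounded effectively in terms of the rank of $(R')^{*}$, hence in terms of $d(\cO,q,r)$. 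One may choose the pair $(\sigma,\tau)$ so that both $A/B$ and $C/D$ have infinite order, for otherwise a suitable power of $q$ or $r$ would lie in $K$, hence in $\cO$, contradicting the hypothesis; with this choice, each non-degenerate tuple uniquely determines $(m,n)$, yielding the desired bounded count.

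The degenerate solutions (proper subsum vanishing) split into three cases corresponding to the three $2$-$2$ partitions of $\{A^m,-B^m,-u_{\sigma,\tau}C^n,u_{\sigma,\tau}D^n\}$: (i) $A^m=B^m$ and $C^n=D^n$, which constrains $m,n$ via the bounded orders of the torsion quotients and gives finitely many pairs; (ii) $\sigma(q^m/r^n)=\tau(q^m/r^n)=u_{\sigma,\tau}$, which after running over enough pairs of embeddings forces $q^m/r^n\in K$, and the explicit presentation $r^n=f(q^m)$ with $f\in\cO[X]$ together with the fact that $\{1,r^n,\ldots,(r^n)^{[L:K]-1}\}$ is a $K$-basis of $L$ upgrades this to $q^m/r^n\in\cO^{*}$, placing $(m,n)\in\cA_{\cO,q,r}$; (iii) $\sigma(q^m)=-u_{\sigma,\tau}\tau(r^n)$ and $\tau(q^m)=-u_{\sigma,\tau}\sigma(r^n)$, which yields $\sigma(q^mr^n)=\tau(q^mr^n)$ globally, placing $(m,n)\in\cC_{\cO,q,r}$ when $q,r$ are units over $\cO$; and in the residual situation $[L:K]=2$, the same relations directly produce $q^m/\sigma(r^n)\in\cO^{*}$ for the nontrivial $\sigma\in\Gal(L/K)$, so $(m,n)\in\cB_{\cO,q,r}$.

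The main obstacle is case (iii) in the regime where $[L:K]>2$ and at least one of $q,r$ fails to be a unit over $\cO$ (so $\cC_{\cO,q,r}=\emptyset$). Here a pointwise degeneracy for a single pair $(\sigma,\tau)$ does not by itself force $(m,n)$ into $\cA\cup\cB\cup\cC$, and one must argue that if the degeneracy holds for every pair of embeddings, then multiplying the identities $\sigma(q^m)=-u_{\sigma,\tau}\tau(r^n)$ over all $\sigma$ gives a norm-type identity $\bNorm_{K(q^m)/K}(q^m)=\pm v\cdot\bNorm_{K(r^n)/K}(r^n)$ with $v\in\cO^{*}$, which contradicts the hypothesis that $q^m$ (or $r^n$) has a non-trivial norm factor unless $(m,n)$ lies in a bounded set. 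A secondary point is descending memberships from $(\cO')^{*}$ to $\cO^{*}$: integrality of $q^m/r^n$ or $q^mr^n$ (combined with the explicit $\cO[X]$-polynomials $f$ and $g$) forces any such element of $K$ that is a unit in $\cO'$ and whose inverse is also integral to already lie in $\cO^{*}$, as required.
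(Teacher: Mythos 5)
Your overall plan—pass from $\cO[q^m]=\cO[r^n]$ to a unit equation among differences of Galois conjugates, bound nondegenerate solutions by Evertse--Schlickewei--Schmidt, and then argue that persistent degeneracy forces $(m,n)$ into $\cA\cup\cB\cup\cC$---is the same route the paper takes. However, there are two concrete gaps.

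\textbf{Uniformity of the bound.} You pass to $\cO'=\cO[\text{(primes dividing }\bNorm(q)\bNorm(r))^{-1}]$ so that $q,r$ become units, and then claim the rank of $(\cO')^{*}$ is still controlled by $d(\cO,q,r)$ via Definition~\ref{def:dAab}(b). That is not so: the number of height-one primes of $\cO$ dividing $\bNorm_{K(q)/K}(q)\cdot\bNorm_{K(r)/K}(r)$ is not bounded in terms of $d(\cO,q,r)$ (it depends on the arithmetic of the specific elements $q,r$), and each such prime can contribute to the rank of $(\cO')^{*}$. Consequently, the bound you extract from ESS applied to $(R')^{*}$ would depend on more than $d(\cO,q,r)$. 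The fix is the one the paper uses: do not localize at all, but work with the finitely generated subgroup $G$ of $L^{*}$ generated by the unit groups $\cO[\sigma(q),\tau(q),\sigma(r),\tau(r)]^{*}$ together with the (finitely many) conjugates of $q,r$. This $G$ already contains everything needed (the units $u_{\sigma,\tau}$ and the ratios $\sigma(q^m)/\tau(q^m)$, etc.), and its rank is bounded by $d(\cO,q,r)$ plus $O([K(q,r):K])$, hence by a function of $d(\cO,q,r)$ alone.

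\textbf{The degenerate-case analysis is incomplete.} This is the heart of the theorem and you explicitly flag it as the main obstacle without closing it. Two points. First, you say that in case (ii) ``after running over enough pairs of embeddings'' you force $q^m/r^n\in K$; but for each pair $(\sigma,\tau)$ the degeneracy may be of a different type, so you cannot assume case (ii) holds for all pairs. The paper handles this by observing that the three types of degeneracy identify, for each coset, an element of one of three subgroups $\Gal(L/K^{o})$, $H_1=\Gal(L/K(q^{\tm}/r^{\tn}))$, $H_2=\Gal(L/K(q^{\tm}r^{\tn}))$, and then invokes Scorza's lemma on a group covered by three proper subgroups (Lemma~\ref{lem:3subgroups}) to conclude that in fact $H_1=\Gal(L/K)$ or $H_2=\Gal(L/K)$. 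This group-theoretic input is essential and absent from your argument. Second, your proposed resolution of case (iii)---multiplying $\sigma(q^m)=-u_{\sigma,\tau}\tau(r^n)$ over all $\sigma$ to get $\bNorm_{K(q^m)/K}(q^m)=\pm v\cdot\bNorm_{K(r^n)/K}(r^n)$ with $v\in\cO^{*}$---does not yield a contradiction: that norm relation is automatically true whenever $\cO[q^m]=\cO[r^n]$ (compare constant terms of minimal polynomials via Proposition~\ref{prop:discriminant} and its surroundings). What is actually needed, and what the paper supplies, is a finer argument: write $q^{\tm}=a_0+\cdots+a_{d'-1}r^{(d'-1)\tn}$ over $\cO$, use $\alpha=q^{\tm}r^{\tn}\in\cO$ to read off $\pm\Norm_{F/K}(r^{\tn})=\alpha/a_{d'-1}$, and combine this with the relation $q^{\tm}=-u_{\tm,\tn,\eta}\eta(r^{\tn})$ to deduce $a_{d'-1}\in\cO^{*}$ and that $r^{\tn}$ is a unit over $\cO$, which then places $(\tm,\tn)$ in $\cC$ (or $\cB$ when $d'=2$). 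Without these two ingredients—the Scorza argument and the coefficient/norm calculation—the proof does not close.
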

	
	The bound $c_3$ as well as other similar bounds in this paper follow from work
	 of Beukers and Schlickewei \cite{BeuSch},
	and Evertse, Schlickewei, and Schmidt \cite{ESS} on unit
	equations together with some combinatorial arguments. Hence it is fairly
	straightforward to make them explicit although we do not provide all
	the details in doing so. 
	Conceivably, all
	these bounds are far from optimal. In this paper, we do not
	spend any efforts to optimize them as long as
	they depend uniformly only on $d(\cO,q,r)$
	instead of $\cO$, $q$, and $r$.
	Theorem~\ref{thm:main 2} immediately implies the following (see
	Theorem~1.1 and Theorem~1.3 in \cite{BellHare}).
	\begin{corollary}\label{cor:1.1}
		Let $K$ be a number field with the ring of integers $\cO_K$. Let $q$ be an 
		algebraic integer such that $q^n\notin \cO_K$ 
		for every $n\in\N$. There is
		an effectively computable constant $c_4$ depending only on
		$[K(q):\Q]$ such that there are at most $c_4$
		pairs $(m,n)\in\N^2$ satisfying $m\neq n$ and $\cO_K[q^m]=\cO_K[q^{n}]$.
	\end{corollary}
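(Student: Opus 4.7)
The strategy is to apply Theorem~\ref{thm:main 2} directly with $\cO=\cO_K$ and $r=q$. Since $\cO_K$ is integrally closed and $q^n\notin\cO_K$ for every $n\in\N$, the hypotheses of the theorem are satisfied. The plan is then to verify two things: first, that under this hypothesis each of the three exceptional sets $\cA_{\cO_K,q,q}$, $\cB_{\cO_K,q,q}$, $\cC_{\cO_K,q,q}$ has empty intersection with $\{(m,n)\in\N^2:m\neq n\}$; and second, that $d(\cO_K,q,q)$ can be bounded purely in terms of $[K(q):\Q]$. Then $c_4$ may be taken to be $c_3$ evaluated at that bound.

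First I will dispense with the three degenerate sets. If $(m,n)\in\cA_{\cO_K,q,q}$ and $m\neq n$, then $q^m/q^n\in\cO_K^*$ forces $q^{|m-n|}\in\cO_K^*\subseteq\cO_K$ (directly when $m>n$; by inverting a unit when $m<n$), contradicting the standing hypothesis. If $(m,n)\in\cC_{\cO_K,q,q}$, then $q^{m+n}\in\cO_K^*\subseteq\cO_K$, again a contradiction. The only nontrivial case is $\cB_{\cO_K,q,q}$: if $(m,n)$ lies in it then $[K(q^n):K]=2$ and there exists $u\in\cO_K^*$ with $q^m=u\,\sigma(q^n)$, where $\sigma$ is the nontrivial $K$-automorphism of $K(q^n)$. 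Multiplying by $q^n$ gives
\[
q^{m+n}\;=\;u\cdot q^n\sigma(q^n)\;=\;u\cdot\Norm_{K(q^n)/K}(q^n)\;\in\;\cO_K,
\]
since the norm of an algebraic integer in $K$ lies in $\cO_K$. This contradicts the hypothesis, so all three exceptional sets avoid $\{m\neq n\}$.

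Next I will bound $d(\cO_K,q,q)$. Taking $S$ to be the set of archimedean places of $K$, so that $|S|\leq [K:\Q]\leq [K(q):\Q]$, Remark~\ref{rem:Dirichlet} gives all needed estimates: $[K(q):K]\leq [K(q):\Q]$; the compositum of any two $K$-embeddings of $K(q)$ into $\bar K$ sits inside a number field of degree at most $[K(q):\Q]^2$ whose unit rank is bounded via Dirichlet, so in particular the rank of any subring such as $\cO_K[\sigma(q),\tau(q)]$ is bounded accordingly; and the number of roots of unity in $K$ is controlled by $[K:\Q]$ through $\varphi(n)\leq [K:\Q]$. Combining this with Theorem~\ref{thm:main 2} yields the desired constant $c_4$ depending only on $[K(q):\Q]$.

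The substantive work is already carried by Theorem~\ref{thm:main 2}; the only delicate point in the present deduction is the verification for $\cB_{\cO_K,q,q}$, where one must remember that the specialization $r=q$ turns the relation $q^m=u\sigma(q^n)$ into a statement about $q^{m+n}$ (rather than $q^{m-n}$) once combined with the norm identity in the quadratic extension.
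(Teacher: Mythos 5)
Your proof is correct and follows essentially the same route as the paper: apply Theorem~\ref{thm:main 2} with $r=q$ and $\cO=\cO_K$, use Remark~\ref{rem:Dirichlet} to reduce the dependence of the bound to $[K(q):\Q]$, and check that the exceptional sets $\cA_{\cO_K,q,q}$, $\cB_{\cO_K,q,q}$, $\cC_{\cO_K,q,q}$ contribute no pairs with $m\neq n$. The only difference is that the paper simply asserts $\cB_{\cO_K,q,q}$ and $\cC_{\cO_K,q,q}$ are empty ``by the assumption on $q$,'' whereas you supply the short norm computation showing $q^{m+n}\in\cO_K$ in each case; that is a sound (and welcome) filling-in of a step the paper leaves implicit.
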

	\begin{proof}
	We apply Theorem~\ref{thm:main 2} with $r=q$ and $\cO=\cO_K$;
	the resulting bound $c_4$ only depends on $[K(q):\Q]$ thanks to
	Remark~\ref{rem:Dirichlet}.
	By the assumption on $q$,
	the sets $\cB_{\cO,q,q}$ and $\cC_{\cO,q,q}$ are empty while the set
	$\cA_{\cO,q,q}$ is exactly the set of pairs $(m,n)$ with $m=n$.
	\end{proof}
	
	\begin{corollary}\label{cor:1.3}
		Let $q$ and $r$ be algebraic integers. There is
		an effectively computable constant $c_5$ 
		depending only on $\max\{[\Q(q):\Q],[\Q(r):\Q]\}$ such that
		the following holds.
		If there are more than $c_5$ numbers $n\in\N$ such that
		$\Z[q^n]=\Z[r^n]$ then we have one of the following:
		\begin{itemize}
			\item [(a)] $\displaystyle\frac{q}{r}$ is a root of unity.
			\item [(b)] $\displaystyle\frac{q}{r_1}$ is a root of unity for
			some conjugate $r_1\neq r$. Moreover, this
			case can only happen if for some $n\in\N$, we have $[\Q(r^n):\Q]=2$
			and $r_1^n\neq r^n$.
			\item [(c)] $qr$ is a root of unity.
		\end{itemize}
	\end{corollary}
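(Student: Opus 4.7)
The plan is to apply Theorem~\ref{thm:main 2} with $\cO=\Z$ to the diagonal pairs $(n,n)\in\N^2$. By Remark~\ref{rem:Dirichlet} the quantity $d(\Z,q,r)$ is bounded purely in terms of $\max\{[\Q(q):\Q],[\Q(r):\Q]\}$, so the constant $c_3$ produced by Theorem~\ref{thm:main 2} depends only on this maximum; take $c_5:=c_3$. Under the implicit hypothesis $\{q^n,r^n:n\in\N\}\cap\Z=\emptyset$ (the remaining case will be handled by the reduction announced in Section~\ref{sec:add}), Theorem~\ref{thm:main 2} tells us that at most $c_5$ of the diagonal pairs $(n,n)$ with $\Z[q^n]=\Z[r^n]$ can lie outside $\cA_{\Z,q,r}\cup\cB_{\Z,q,r}\cup\cC_{\Z,q,r}$. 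Hence if more than $c_5$ values of $n$ satisfy the equation, at least one such $n$ places the pair $(n,n)$ inside one of these three ``degenerate'' sets, and it suffices to verify that each membership forces one of (a)--(c).

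The verification is a direct translation. Membership of $(n,n)$ in $\cA_{\Z,q,r}$ means $q^n/r^n\in\Z^{*}=\{\pm 1\}$, so $(q/r)^{2n}=1$ and $q/r$ is a root of unity, giving (a). Membership in $\cC_{\Z,q,r}$ means $q^nr^n\in\{\pm 1\}$, so $(qr)^{2n}=1$ and $qr$ is a root of unity, giving (c). For membership in $\cB_{\Z,q,r}$, one has $[\Q(r^n):\Q]=2$; extend the nontrivial $\Q$-automorphism $\sigma$ of $\Q(r^n)$ to a $\Q$-embedding $\tau$ of $\Q(r,r^n)$ into $\overline{\Q}$ and put $r_1:=\tau(r)$. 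Then $r_1$ is a $\Q$-conjugate of $r$ satisfying $r_1^n=\sigma(r^n)\neq r^n$, which in particular forces $r_1\neq r$, and the defining condition $q^n/\sigma(r^n)\in\{\pm 1\}$ rearranges to $(q/r_1)^{2n}=1$. Thus $q/r_1$ is a root of unity, and the \emph{moreover} clause in (b) is precisely the defining condition on $n$ in $\cB_{\Z,q,r}$.

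The only genuine subtlety I foresee lies in the implicit hypothesis $\{q^n,r^n:n\in\N\}\cap\Z=\emptyset$ needed to apply Theorem~\ref{thm:main 2} as stated. If instead some $q^{n_0}$ or $r^{n_0}$ is already in $\Z$, one first reduces to the clean case by passing to suitable common powers of $q$ and $r$ as in Section~\ref{sec:add}, and the three-case translation above then applies verbatim. Beyond that no step is delicate: all the analytic content of the corollary is absorbed by Theorem~\ref{thm:main 2}, and what remains is a mechanical unpacking of the three degenerate sets into the language of roots of unity.
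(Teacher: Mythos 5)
Your proposal follows the paper's proof exactly: apply Theorem~\ref{thm:main 2} with $\cO=\Z$ to diagonal pairs $(n,n)$, invoke Remark~\ref{rem:Dirichlet} so the constant depends only on $\max\{[\Q(q):\Q],[\Q(r):\Q]\}$, and observe that more than $c_5$ qualifying $n$ forces some $(n_0,n_0)$ into $\cA_{\Z,q,r}\cup\cB_{\Z,q,r}\cup\cC_{\Z,q,r}$. The paper's own proof stops there, merely asserting the correspondence between $\cA,\cB,\cC$ and (a),(b),(c); your explicit verification (in particular, lifting $\sigma$ to an embedding of $\Q(r)$ to produce $r_1$, and checking $r_1^n=\sigma(r^n)\neq r^n$) fills in a step the paper leaves tacit, and it is correct.

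One point deserves a caution. You flag the implicit hypothesis $\{q^n,r^n:n\in\N\}\cap\Z=\emptyset$ and suggest the remaining case ``reduces to the clean case by passing to suitable common powers.'' That reduction does not actually go through: the conclusion of Theorem~\ref{thm:main 2} is not stable under such a passage, and in fact the corollary \emph{as literally stated} fails without the hypothesis. Take $q=2$ and $r=i$; then $\Z[q^n]=\Z[r^n]=\Z$ for every even $n$, yet $q/r=-2i$, $q/r_1=2i$, and $qr=2i$ all have absolute value $2$ and none is a root of unity, so none of (a),(b),(c) holds. The paper's own proof of Corollary~\ref{cor:1.3} silently inherits the hypothesis of Theorem~\ref{thm:main 2} (as does Corollary~\ref{cor:1.1}, which states it explicitly), so the right reading is that the hypothesis is implicitly assumed here too rather than dischargeable by a Section~\ref{sec:add}-style argument. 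Your core proof is fine once read that way; only the last paragraph's claimed fix should be dropped or replaced by the observation that the hypothesis is carried over.
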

	\begin{proof}
		We apply Theorem~\ref{thm:main 2} when $K=\Q$ and $\cO=\Z$;
		the resulting bound $c_5$ only depends on $\max\{[\Q(q):\Q],[\Q(r):\Q]\}$
		by Remark~\ref{rem:Dirichlet}.
		Since there are more than $c_5$ many
		$n$ such that $\Z[q^n]=\Z[r^n]$, by Theorem~\ref{thm:main 2}
	  there is some $n_0$ such that $(n_0,n_0)\in 
	  \cA_{\Z,q,r}\cup\cB_{\Z,q,r}\cup\cC_{\Z,q,r}$. Cases (a), (b), and (c) respectively
	  come from the cases where $(n_0,n_0)$ belong to $\cA_{\Z,q,r}$, $\cB_{\Z,q,r}$
	  and $\cC_{\Z,q,r}$.
	\end{proof}
	
	The previous corollaries strengthen results by Bell and Hare mentioned in parts
	(i) and (ii) at the beginning of this paper. For their questions asked in part 
	(iii), we fix $r$ which is integral over $\cO$ and study the collection
	of $q$ satisfying the equation 
	$\cO[q]=\cO[r]$. Evertse and Gy\H{o}ry \cite{EG1985} prove that
	there exists a positive integer $N$ uniformly bounded by an explicit quantity with 
	a mild dependence
	on $\cO$ and $r$
	such that the following holds.
	There are $s_1,\ldots,s_N$ such that $\cO[s_i]=\cO[r]$ for $1\leq i\leq N$
	and for every $q$ satisfying $\cO[q]=\cO[r]$, we have
	$q-us_i\in\cO$ for some $u\in \cO^{*}$. Furthermore, if 
	$\cO\subseteq\bar{\Q}$ or if $\cO$ belongs to certain
	families of integrally closed finitely generated domains
	then Gy\H{o}ry proves that a list $\{s_1,\ldots,s_N\}$ satisfying the above 
	property can be
	determined effectively. We refer the readers to \cite{Gyory1984},
	\cite{EG1985} and the references there for more details. In 
	Section~\ref{sec:EvertseGyory}, we briefly explain how to prove the above results 
	by
	Gy\H{o}ry and Evertse-Gy\H{o}ry and why they immediately answer
	the questions by Bell and Hare.
	
	To prove Theorem~\ref{thm:main 2}, we start with the equality 
	$\cO[q^m]=\cO[r^n]$
	and its consequence that the discriminants of $q^m$ and $r^n$ over $K$
	differ by an element in $\cO^*$. Then
	it is straightforward to obtain a list 
	of solutions of the unit equation $x+y+z=1$ where $x,y,z$ belong
	to a subgroup of $\bar{K}^{*}$ whose rank is bounded in terms of
	$d(\cO,q,r)$. A celebrated
	result of Evertse, Schlickewei and Schmidt \cite{ESS} provides a uniform bound 
	for the
	number of nondegenerate solutions (i.e. when $x,y,z\neq 1$). On the other hand,
	when many solutions are degenerate, it is
	not obvious to
	get out the exact relations as described in
	the definition of $\cA_{\cO,q,r}$, $\cB_{\cO,q,r}$, and $\cC_{\cO,q,r}$.  
	Some extra combinatorial and Galois theoretic arguments are needed for this 
	remaining 
	problem 
	where we bound the  
	number of degenerate solutions that are outside
	$\cA_{\cO,q,r}\cup\cB_{\cO,q,r}\cup\cC_{\cO,q,r}$. The proof of 
	the results by Gy\H{o}ry and Evertse on the equation $\cO[q]=\cO[r]$
	follows the same 
	idea, but we have a simpler equation of the form $x+y=1$ in variables $x,y$ 
	instead. 
	For this equation,
	there is an earlier result by Beukers and Schlickewei \cite{BeuSch} that provides
	a reasonably good uniform bound on the number of solutions. Moreover, when
	$x$ and $y$ are taken inside a finitely generated subgroups of
	$\bar{\Q}^{*}$, the
	equation $x+y=1$ 
  can be solved effectively using Baker's method or
  the Thue-Siegel principle (see, for example, \cite{GyoryYu} and \cite[pp.~146--148]{BG}).  
	
	In the next section, we present results involving discriminants and, more 
	importantly, the above results on unit equations. After that, we present
	briefly the work of Evertse and Gy\H{o}ry on
	the equation $\cO[q]=\cO[r]$ with a given $r$.
	Then we prove Theorem~\ref{thm:main 2} and explain how to
	remove the very mild condition $\{q^n,r^n:\ n\in\N\}\cap \cO=\emptyset$
	assumed there. This provides a complete solution to
	the problem of describing all $(m,n)$ such that $\cO[q^m]=\cO[r^n]$
	for \emph{any} $q$ and $r$. At the end, we discuss the effectiveness
	of the results in this paper and
	some related questions.

	{\bf Acknowledgments.}  We wish to thank Professors Jason Bell, Mike Bennett,
	Jan-Hendrik Evertse, Dragos Ghioca, K\'alm\'an Gy\H{o}ry, Kevin Hare, and the anonymous referee for many
	helpful suggestions to improve the paper.

	\section{Some Preliminary Results}\label{sec:preliminary}
	\subsection{Discriminants}
	Let $A$ be an integrally closed domain with fraction field $E$ of characteristic 0. For any 
	$\alpha$ algebraic over $E$ of degree $D$  with conjugates 
	$\alpha_0=\alpha,...,\alpha_{D-1}$
	we define the discriminant of  $\alpha$ over $E$ to be:
	$$\disc_E(\alpha):=\prod_{0\leq i<j<D} (\alpha_i-\alpha_j)^2.$$
	
	We have the following well-known result:
	\begin{proposition}\label{prop:discriminant}
	If $\alpha$ and $\beta$ are integral over $A$ satisfying $A[\alpha]=A[\beta]$ then 
	$\disc_E(\alpha)=u\disc_E(\beta)$ for some $u\in A^*$. 
	\end{proposition}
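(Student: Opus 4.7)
The plan is to reduce the statement to a change-of-basis computation for the discriminant, with the integral closedness of $A$ being the crucial hypothesis that lets us view $A[\alpha]$ and $A[\beta]$ as free $A$-modules with the obvious power bases.

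First I would verify that $\alpha$ and $\beta$ have the same degree $D$ over $E$. Passing to the fraction field, the equality $A[\alpha]=A[\beta]$ gives $E[\alpha]=E[\beta]$, hence $E(\alpha)=E(\beta)$ and $[E(\alpha):E]=[E(\beta):E]=:D$. Since $A$ is integrally closed in $E$ and $\alpha,\beta$ are integral over $A$, their minimal polynomials over $E$ actually lie in $A[X]$. A standard division-with-remainder argument against these monic minimal polynomials then shows that $A[\alpha]$ is a free $A$-module with basis $\{1,\alpha,\ldots,\alpha^{D-1}\}$, and likewise $A[\beta]$ is free on $\{1,\beta,\ldots,\beta^{D-1}\}$. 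This is the only step where integral closedness really matters, and it is the main point to get right.

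The rest is linear algebra. Let $M\in M_D(A)$ be the matrix expressing $(1,\beta,\ldots,\beta^{D-1})$ in the basis $(1,\alpha,\ldots,\alpha^{D-1})$; every entry lies in $A$ because each $\beta^i$ lies in $A[\beta]=A[\alpha]$, and the expansion against a free basis is unique. By the symmetric argument there is a matrix $N\in M_D(A)$ giving the inverse change of basis, and $MN=I$, whence $\det(M)\det(N)=1$ in $A$ and $\det(M)\in A^{*}$.

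Finally, the standard change-of-basis formula for the discriminant (viewed either as $\det(\sigma_i(\omega_j))^2$ or as $\det(\Tr_{E(\alpha)/E}(\omega_i\omega_j))$ for a basis $\omega_1,\ldots,\omega_D$) gives
\[
\disc_E(\beta)=\det(M)^2\,\disc_E(\alpha).
\]
Setting $u=\det(M)^{-2}\in A^{*}$ yields $\disc_E(\alpha)=u\,\disc_E(\beta)$, as required. I expect the only delicate point in writing this up carefully to be the freeness of $A[\alpha]$ on the monomial basis; once that is in hand the discriminant identity drops out from elementary matrix manipulation.
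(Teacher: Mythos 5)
Your proof is correct and is precisely the argument the paper outsources to Ribenboim: freeness of $A[\alpha]$ and $A[\beta]$ on the power bases (with integral closedness supplying that the minimal polynomial lies in $A[X]$), followed by the fact that the change-of-basis matrix is invertible over $A$ and that the discriminant transforms by the square of its determinant. No substantive difference in approach.
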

	\begin{proof}
	See \cite[pp.~20--21]{Ribenboim}. Although the results there are stated over number
	fields, the proof can be carried over without any change. Integral closedness of $A$
	is needed for the fact that $A[\alpha]$ (respectively $A[\beta]$) is \emph{free} 
	with basis $1,\alpha,\ldots,\alpha^{D-1}$ 
	(respectively $1,\beta,\ldots,\beta^{D-1}$) where $D=[E(\alpha):E]=[E(\beta):E]$.
%
	\end{proof}
	
	\subsection{Unit equations in 3 variables}
	Fix $n\geq 2$, we start with the following:
	\begin{definition}\label{def:discriminant}
  Let $a_1,\ldots,a_n\in \C^*$. A solution $(u_1,\ldots,u_n)\in (\C^*)^n$ of 
  the equation $a_1x_1+\ldots+a_nx_n=1$
  in variables $x_1,\ldots,x_n$
  is called nondegenerate if no subsums vanish. In other words, there
  is no proper subset $\emptyset\neq J\subset\{1,\ldots,n\}$ such that
  $\sum_{j\in J} a_ju_j=0$.
	\end{definition}
	
	Equations of the form $a_1x_1+\ldots+a_nx_n=1$ where each $x_i$ is
	an $S$-unit in a number field have played a fundamental role in
	diophantine geometry since work of Siegel in the 1920s. After many decades
	of intense activities, Evertse, Schlickewei and Schmidt obtained the
	following celebrated result with a remarkable uniform bound 
	\cite[Theorem~1.1]{ESS}:
	\begin{theorem}\label{thm:ESS}
	Suppose $\Gamma$ is a subgroup of $(\C^*)^n$ of rank $R$. Consider
	the equation:
	$$a_1x_1+\ldots+a_nx_n=1$$
	in variables $x_1,\ldots,x_n$. Then the number
	of nondegenerate solutions in $\Gamma$ 
	is at most $\exp((6n)^{3n}(R+1))$.
	\end{theorem}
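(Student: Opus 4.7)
The plan is to reduce the statement to an $S$-unit equation over a number field and then invoke the quantitative Subspace Theorem. Since $\Gamma$ has finite rank $R$, the coordinates of a set of generators of $\Gamma$ together with the coefficients $a_1,\ldots,a_n$ generate a finitely generated subring of $\C$. A standard specialization argument (embed this ring into $\overline{\Q}$, pick a number field $K$ containing the image, and adjoin a finite set of places $S$ of $K$ so that each coordinate of each generator is an $S$-unit) reduces the task to bounding nondegenerate solutions of $a_1 x_1 + \cdots + a_n x_n = 1$ inside a rank-$R$ subgroup of $(\cO_{K,S}^*)^n$, with a bound depending only on $n$ and $R$. This step is routine but essential: without it one has no arithmetic handle on the solutions.

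To each solution one associates the projective point $(a_1 x_1 : \cdots : a_n x_n : -1) \in \Pl^n(K)$. At each place $v\in S$ one then selects a system of $n+1$ linear forms so that their product, evaluated at this point, is small by virtue of the defining identity $a_1 x_1 + \cdots + a_n x_n = 1$. The quantitative Subspace Theorem (in the Evertse--Schlickewei refinement of Schmidt) forces all solutions of sufficiently large height to lie in a finite union of proper linear subspaces of $\Pl^n$, whose cardinality is bounded explicitly in $n$ alone. On each such exceptional subspace the equation reduces to one in fewer variables, and one inducts on $n$; the nondegeneracy hypothesis is precisely what prevents a nontrivial vanishing subsum from being inherited by the restriction, so that the induction is clean. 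The remaining ``small-height'' solutions are controlled by a box-counting/gap argument whose output depends only on $R$. Tracking the constants through both contributions yields the advertised estimate $\exp((6n)^{3n}(R+1))$.

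The main obstacle is the Subspace-Theorem input: its proof rests on the geometry of numbers and a delicate inductive construction of auxiliary lattices, and the quantitative refinement required here — producing an explicit number of exceptional subspaces — is a nontrivial piece of arithmetic geometry. The genuinely surprising feature of the conclusion is that the final bound depends only on $n$ and $R$, not on $K$, $S$, or the $a_i$. This uniformity is possible only because the Subspace Theorem admits an absolute, height-theoretic formulation, and because the local linear forms at each $v\in S$ can be packaged uniformly so that the $S$-dependence disappears after twisting by the $S$-unit structure.
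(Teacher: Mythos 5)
The paper does not prove this statement at all: Theorem~\ref{thm:ESS} is imported verbatim from Evertse, Schlickewei and Schmidt \cite{ESS} with a citation, and that is the appropriate treatment, since its proof is a substantial research paper in its own right. So there is no ``paper's own proof'' to compare against; the right response to this line of the paper is simply to verify that the cited statement matches \cite[Theorem~1.1]{ESS}. Your attempt to outline a proof from scratch is overreach: you are re-deriving a black box rather than checking a lemma.

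That said, judged purely as a roadmap your sketch does trace the broad shape of the ESS argument (specialization, projective embedding of solutions, the Evertse--Schlickewei quantitative absolute Subspace Theorem, induction over exceptional subspaces using nondegeneracy, separate handling of small-height solutions). But two of your steps conceal genuine difficulties. First, the specialization: a finitely generated $\Z$-subalgebra of $\C$ of positive transcendence degree does \emph{not} embed into $\overline{\Q}$, so one cannot ``pick a number field containing the image''; one must choose a specialization homomorphism and then argue that it can be chosen so that rank does not increase, distinct nondegenerate solutions remain distinct, and nondegeneracy is preserved --- none of which is automatic. Second, the sentence ``tracking the constants through both contributions yields the advertised estimate $\exp((6n)^{3n}(R+1))$'' compresses essentially the entire quantitative content of \cite{ESS} into a single assertion; the small-height ``box-counting/gap argument'' in particular is not a standard move one can wave at, and getting a bound depending only on $n$ and $R$ is exactly where the work is. As written, the proposal is an annotated bibliography entry for \cite{ESS}, not a proof, and the paper is correct to cite rather than prove.
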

	
	As a consequence, we have the following:
	\begin{corollary}\label{cor:3 variables}
	 Let $G$ be a subgroup of $\C^{*}$ of rank $R$, there are at most
	 $\exp(18^{9}(3R+1))$ nondegenerate solutions
	 $(x_1,x_2,x_3)\in G^3$ of the equation $x_1+x_2+x_3=1$.
	\end{corollary}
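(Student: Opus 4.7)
The plan is to derive Corollary~\ref{cor:3 variables} as a direct specialization of Theorem~\ref{thm:ESS}. I would take $n=3$ and coefficients $a_1=a_2=a_3=1$, so the equation becomes $x_1+x_2+x_3=1$, and I would apply the theorem to the subgroup $\Gamma := G\times G\times G$ of $(\C^*)^3$, noting that $(x_1,x_2,x_3)\in G^3$ is precisely the condition that the solution vector lies in $\Gamma$.

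The one substantive point to check is the rank of $\Gamma$. Since $G\subseteq \C^*$ has rank $R$ by hypothesis, it fits in a short exact sequence with a finite torsion subgroup and a free abelian group of rank $R$. Taking the threefold product multiplies the torsion-free rank by $3$, so $\Gamma=G^3$ has rank exactly $3R$ as a subgroup of $(\C^*)^3$. Plugging $n=3$ and $R_\Gamma=3R$ into the bound $\exp((6n)^{3n}(R_\Gamma+1))$ of Theorem~\ref{thm:ESS} gives
\[
\exp\bigl((6\cdot 3)^{3\cdot 3}(3R+1)\bigr)=\exp\bigl(18^{9}(3R+1)\bigr),
\]
which is exactly the claimed bound on the number of nondegenerate solutions.

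There is essentially no obstacle here: the entire content of the corollary is the translation of the statement from the product group setting of Theorem~\ref{thm:ESS} into the statement about triples from a single rank-$R$ subgroup of $\C^*$, together with the elementary rank computation for $G^3$. The only thing to be slightly careful about is the definition of nondegeneracy, which is identical in both statements (no proper subsum of $x_1+x_2+x_3$ equals zero), so no translation between conventions is required.
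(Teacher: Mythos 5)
Your proposal is correct and is exactly the intended argument: the paper states Corollary~\ref{cor:3 variables} as an immediate consequence of Theorem~\ref{thm:ESS}, and the only computation needed is precisely the one you carry out, namely that $\Gamma=G^3\subseteq(\C^*)^3$ has rank $3R$, so the bound $\exp((6n)^{3n}(R_\Gamma+1))$ with $n=3$ becomes $\exp(18^9(3R+1))$. No further commentary is needed.
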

	
	\subsection{Unit equations in 2 variables}
	The special case of Theorem~\ref{thm:ESS} when $n=2$ 
	was obtained 
	earlier by Beukers and Schlickewei \cite[Theorem~1.1]{BeuSch}. It has
	the immediate consequence:
	\begin{corollary}\label{cor:2 variables}
		Let $G$ be a subgroup of $\C^{*}$ of rank $R$,
		there are at most $2^{16R+16}$
		solutions $(x,y)\in G^2$ of the equation
		$ax+by=1$. 
	\end{corollary}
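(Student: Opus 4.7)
The plan is to invoke the $n=2$ Beukers--Schlickewei sharpening of Theorem~\ref{thm:ESS} directly. Their result states that for any subgroup $\Gamma\subset\C^{*}$ of rank $R$ and any nonzero coefficients $a_{1},a_{2}\in\C^{*}$, the number of solutions $(u,v)\in\Gamma^{2}$ of the linear equation $a_{1}u+a_{2}v=1$ is bounded by $2^{16(R+1)}$, uniformly in the coefficients. Matching this to our setting, I would take $\Gamma=G$, $a_{1}=a$, $a_{2}=b$, and read off the bound $2^{16R+16}$ immediately.

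The one conceptual point worth highlighting is the essential role of the inhomogeneous form of Beukers--Schlickewei. Had one only the homogeneous case $x+y=1$, one would have been forced to substitute $u=ax$, $v=by$ and adjoin $a$ and $b$ to $G$, yielding a subgroup $\Gamma':=\langle G,a,b\rangle$ of rank at most $R+2$ and a correspondingly weaker estimate of the form $2^{16(R+3)}$. The uniform control in the coefficients provided by Beukers--Schlickewei is precisely what permits the clean bound $2^{16R+16}$ without any inflation of the rank.

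There is no genuine obstacle; the corollary is a one-step specialization of the Beukers--Schlickewei theorem. The entire substance of the argument lies in their deep result on the number of solutions of linear equations in two variables taken from a finite-rank multiplicative subgroup, and the only task in writing out the proof is verifying that the notation lines up correctly so that $R$ in their bound matches the rank of $G\subset\C^{*}$ in our statement.
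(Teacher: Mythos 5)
Your proposal and the paper's proof diverge on what Beukers--Schlickewei actually provide. You attribute to them an inhomogeneous form, in which $a_{1}u + a_{2}v = 1$ is bounded directly for arbitrary coefficients $a_{1},a_{2}\in\C^{*}$ and $(u,v)$ ranging over a rank-$R$ subgroup of $\C^{*}$. The paper, however, invokes Beukers--Schlickewei (their Theorem~1.1) only for the coefficient-free equation $X+Y=1$ in a finite-rank subgroup $\Gamma$ of $(\C^{*})^{2}$, and then carries out precisely the group-enlargement step that you describe as the ``fallback'': it sets $\Gamma = G\times G$ (rank $2R$; the paper misprints this as ``$R^{2}$''), substitutes $X=ax$, $Y=by$, and adjoins $(a,1)$ and $(1,b)$ to $\Gamma$ to absorb the coefficients, reaching rank at most $2R+2$ in $(\C^{*})^{2}$. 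So your primary route is not the paper's; the paper takes what you call the fallback.

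Two further remarks on the bookkeeping. Your version of the fallback is less efficient than the paper's: enlarging $G\subset\C^{*}$ to $\langle G,a,b\rangle$ of rank at most $R+2$ and then passing to $(\C^{*})^{2}$ costs four extra units of rank, whereas the paper adjoins only $(a,1)$ and $(1,b)$ to $G\times G$, costing two. And with the Beukers--Schlickewei bound $2^{8(r+2)}$ for rank-$r$ subgroups of $(\C^{*})^{2}$, the paper's own enlargement yields $2^{8(2R+4)}=2^{16R+32}$ rather than the $2^{16R+16}$ stated in the corollary; the sharper constant would follow from the coset or inhomogeneous form you invoke, but the paper neither states nor uses it. Since the paper expressly disclaims any effort to optimize such constants this is harmless, but you should note that your answer matches the stated bound only because you are assuming a stronger form of the input theorem than the paper's written proof actually uses.
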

	\begin{proof}
	Let $\Gamma=G\times G$ which has rank $R^2$. Beukers and Schlickewei \cite[Theorem~1.1]{BeuSch} consider the equation $X+Y=1$. We can transform
	the given equation $ax+by=1$ into that equation by enlarging $\Gamma$
	with $(a,1)$ and
	$(1,b)$.	
	\end{proof}
	
	Let $h:\ \Qbar\rightarrow \R_{\geq 0}$ be the absolute logarithmic Weil height
	(see \cite[Chapter~1]{BG}). By using Baker's theory of linear forms in logarithms
	\cite{Baker1975}, \cite{BakerWustholz93}, \cite{Yu2007}
	or the Thue-Siegel principle \cite{Bombieri93}, \cite{BomCoh97},
	we can solve 
	the equation $ax+by=1$ effectively when $a,b\in\Qbar^{*}$ and
	the variables $x$ and $y$ take values in a finitely generated
	subgroups of $\Qbar^{*}$. The readers are referred to \cite[Theorem 1]{GyoryYu}, 
	\cite[pp.~146--148]{BG}, and the references there for more details.
	We have:
	\begin{theorem}\label{thm:effective}
		Let $a,b\in\Qbar^{*}$ and $G$ be a finitely generated subgroup of $\Qbar^{*}$.
		There is an effectively computable constant $c_6(a,b,G)$
	  depending on $a$, $b$, and $G$ such that every solution $(x,y)\in G^2$
	  of $ax+by=1$ satisfies $\max\{h(x),h(y)\}\leq c_6(a,b,G)$.
	\end{theorem}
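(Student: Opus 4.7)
The plan is to reduce the problem to the classical effective $S$-unit equation over a fixed number field, where the known height bound obtained via Baker's theory of linear forms in logarithms (or, alternatively, via the Thue--Siegel principle) can be invoked directly.

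First I would choose generators $g_1,\ldots,g_k$ of $G$ modulo its (finite) torsion subgroup and let $L$ be a number field containing $a$, $b$, $g_1,\ldots,g_k$, together with all roots of unity that appear in $G$. Next, pick a finite set $S$ of places of $L$ consisting of all archimedean places together with every finite place $v$ such that $v(a)\neq 0$, $v(b)\neq 0$, or $v(g_i)\neq 0$ for some $i$. By construction, $a$, $b$, and every element of $G$ lie in the group $\cO_{L,S}^{*}$ of $S$-units of $L$. For any solution $(x,y)\in G^2$ of $ax+by=1$, the pair $(x',y'):=(ax,by)$ then lies in $(\cO_{L,S}^{*})^2$ and satisfies the standard $S$-unit equation $x'+y'=1$.

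Second, I would invoke the effective bound for $x'+y'=1$ with $x',y'\in\cO_{L,S}^{*}$: by \cite[Theorem~1]{GyoryYu} (see also \cite[pp.~146--148]{BG}) there exists a constant $H=H(L,S)$, depending only on $L$ and $S$, such that $\max\{h(x'),h(y')\}\leq H$. This is the step I expect to carry all the weight of the proof and the main obstacle: the quoted bound rests on Baker--W\"ustholz type lower bounds for linear forms in archimedean and non-archimedean logarithms applied to $|1-ax|_v$ at each $v\in S$, together with an application of Dirichlet's $S$-unit theorem to convert such local estimates into a global height bound. Fortunately, this is a well-developed body of machinery that we cite rather than reproduce.

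Finally, using functoriality of the Weil height one has $h(x)\leq h(x')+h(a)$ and $h(y)\leq h(y')+h(b)$, so setting
\[
c_6(a,b,G):=H(L,S)+h(a)+h(b)
\]
yields the asserted effective bound; all the data entering the right-hand side are effectively constructed from $a$, $b$, and a fixed generating set of $G$, so $c_6(a,b,G)$ depends effectively on $a$, $b$, and $G$ only.
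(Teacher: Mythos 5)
Your proposal is correct and matches the paper's intent: the paper states Theorem~\ref{thm:effective} as a consequence of the effective $S$-unit equation results of Győry--Yu and the exposition in Bombieri--Gubler, without writing out the reduction. Your argument fills in precisely that reduction --- embedding $a$, $b$, and $G$ into the $S$-unit group of a suitable number field, passing to $x'+y'=1$ (or invoking \cite[Theorem~1]{GyoryYu} directly for $ax+by=1$), and transferring the height bound back via $h(x)\le h(ax)+h(a)$ --- so it is the same approach, just made explicit.
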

	
	\begin{remark}\label{rem:EGunit}
		Recently, Evertse and Gy\H{o}ry \cite{EGunit2013} showed that 
		Theorem~\ref{thm:effective} still holds without the condition that 
		$a,b\in \bar{\Q}^*$ and $G\subset\bar{\Q}^*$. In their results,
		we need to express
		the finitely generated domain
		$\Z[a,b,g_1,\ldots,g_R]$
		where $g_1,\ldots,g_R$ are generators of $G$
		into the form $\Z[x_1,\ldots,x_m]/I$ and replace
		the height function on $\bar{Q}$ by a certain ``size'' function. We do not
		use this result here and refer the readers to \cite{EGunit2013}.
	\end{remark}
	
	\section{Results of Evertse-Gy\H{o}ry and questions of Bell-Hare}\label{sec:EvertseGyory}
	For the rest of this section,  fix
	an integrally closed finitely generated domain $\cO$ with fraction field $K$. We need the 
	following:
	\begin{definition}\label{def:dAa}
	Let $\alpha$
	be integral over $\cO$. The notation $d(\cO,\alpha)$ denotes
	the maximum of the following
	\begin{itemize}
		\item [(a)] $[K(\alpha):K]$
		\item [(b)] The rank of the group of units of $\cO[\sigma(\alpha),\tau(\alpha)]$
		for any two $K$-embeddings $\sigma$ and $\tau$ of $K(\alpha)$ into $\bar{K}$.
	\end{itemize} 
	\end{definition}
	
	As before, we have the following:
	\begin{remark}\label{rem:Dirichlet 2}
	If $K$ is  a number field, $\cO$ is the ring of $S$-integers in $K$, and 
	$\alpha$ is integral over $\cO$ then $d(\cO,\alpha)$ could be bounded
	explicitly in terms of $[K(\alpha):\Q]$ 
	and the cardinality of $S$.
	\end{remark}

	The following result is established by Gy\H{o}ry \cite{Gyory1984} and Evertse-Gy\H{o}ry \cite{EG1985}:
	\begin{theorem}[Evertse-Gy\H{o}ry]\label{thm:EG}
	Let $r$ be integral over $\cO$. There is an effectively computable
	constant $c_7$ depending only on $d(\cO,r)$ such that the following holds.
	There are $N\leq c_7$ numbers $s_1,\ldots,s_N$
  such that $\cO[s_i]=\cO[r]$ for $1\leq i\leq N$, and for every
  $q$ satisfying $\cO[q]=\cO[r]$, we have $q-us_i\in\cO$ for some
  $1\leq i\leq N$ and some $u\in\cO^*$. Moreover, when $\cO\subset \bar{\Q}$, such
  a list of $s_1,\ldots,s_N$ can be determined effectively. 
	\end{theorem}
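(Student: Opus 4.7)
The plan is to extract from $\cO[q]=\cO[r]$ a two-variable unit equation and then appeal to Corollary~\ref{cor:2 variables}. Write $q=f(r)$ and $r=g(q)$ with $f,g\in\cO[x]$, and let $D=[K(r):K]=[K(q):K]\leq d(\cO,r)$. Label the $K$-conjugates so that, under the embedding $r\mapsto r_i$, one has $q_i=f(r_i)$. The usual divided-difference identity shows
\[
u_{ij}:=\frac{q_i-q_j}{r_i-r_j}\in\cO[r_i,r_j],\qquad i<j,
\]
and the symmetric computation with $r=g(q)$ gives $u_{ij}^{-1}\in\cO[q_i,q_j]=\cO[r_i,r_j]$. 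Hence each $u_{ij}\in\cO[r_i,r_j]^{*}$; this is the multiplicative refinement of the discriminant identity in Proposition~\ref{prop:discriminant}.

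For $D=1$ the statement is trivial, and for $D=2$ the nontrivial $K$-automorphism of $K(r)$ swaps $(q_1,q_2)$ and $(r_1,r_2)$ simultaneously, so $u_{12}$ is Galois-fixed and hence lies in $K\cap\cO[r_1,r_2]^{*}=\cO^{*}$ by integral closedness of $\cO$; a second Galois-invariance argument shows $q_1-u_{12}r_1\in\cO$, so one may take $N=1$ and $s_1=r$. For $D\geq 3$, apply the telescoping identity $(q_1-q_2)+(q_2-q_3)=(q_1-q_3)$, substitute $q_i-q_j=u_{ij}(r_i-r_j)$, and divide by $u_{13}(r_1-r_3)$ to obtain
\[
aX+bY=1,\qquad a:=\frac{r_1-r_2}{r_1-r_3},\ \ b:=\frac{r_2-r_3}{r_1-r_3},\ \ X:=\frac{u_{12}}{u_{13}},\ \ Y:=\frac{u_{23}}{u_{13}}.
\]
Here $a,b$ depend only on $r$, while $(X,Y)$ lies in the multiplicative group generated by $\cO[r_1,r_2]^{*}\cup\cO[r_1,r_3]^{*}\cup\cO[r_2,r_3]^{*}$, of rank at most $3\,d(\cO,r)$ by Definition~\ref{def:dAa}(b). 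Corollary~\ref{cor:2 variables} then bounds the number of such $(X,Y)$ by a constant depending only on $d(\cO,r)$.

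The remaining task—the main technical hurdle—is to convert the bound on unit-equation solutions into a bound on $q$ modulo the equivalence $q\sim\epsilon q+c$ with $\epsilon\in\cO^{*},\ c\in\cO$ that appears in the statement. If two admissible $q,q'$ give tuples with $u'_{ij}=\lambda u_{ij}$ for a common $\lambda$, then Galois-equivariance of both tuples (the indices being permuted by $\Gal(\bar K/K)$) forces $\sigma(\lambda)=\lambda$, so $\lambda\in K$; since $\lambda\in\cO[r_i,r_j]^{*}$ for any pair, integral closedness gives $\lambda\in\cO^{*}$, and an identical Galois-plus-integrality argument shows $q'-\lambda q\in\cO$. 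Thus equivalence classes of $q$ are in bijection with $\cO^{*}$-scaling orbits of the tuples $(u_{ij})_{i<j}$, which in turn are determined by the ratios $u_{ij}/u_{13}$. Running the unit-equation bound across each of the $\binom{D}{3}\leq\binom{d(\cO,r)}{3}$ triples, the number of consistent ratio-tuples, and hence of equivalence classes of $q$, is bounded by a constant $c_7$ depending only on $d(\cO,r)$. Taking one representative per class yields the list $s_1,\ldots,s_N$. For the effective assertion when $\cO\subseteq\bar\Q$, one invokes Theorem~\ref{thm:effective} in place of Corollary~\ref{cor:2 variables} to enumerate, rather than merely count, the solutions of each unit equation and thereby produce the list explicitly; the Galois-theoretic descent above is then constructive as well.
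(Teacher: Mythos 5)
Your counting argument for $N\leq c_7$ follows essentially the same route as the paper's: the telescoping identity $(q_1-q_2)+(q_2-q_3)=(q_1-q_3)$ is Siegel's identity in disguise, dividing through gives a two-variable unit equation whose solution set is bounded via Corollary~\ref{cor:2 variables}, and the Galois-invariance-plus-integral-closedness argument descending from a common scalar $\lambda\in L^*$ to $\lambda\in\cO^*$ and then to $q'-\lambda q\in\cO$ is the same as the paper's. The reformulation through the divided differences $u_{ij}\in\cO[r_i,r_j]^*$ instead of \eqref{eq:unit 1}, and the explicit treatment of the case $D=2$, are presentational rather than substantive differences. So up to the finiteness statement, the proof is sound and matches the paper.

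The effectiveness assertion, however, has a genuine gap. Effectively enumerating the solutions of $aX+bY=1$ via Theorem~\ref{thm:effective} hands you only the ratios $u_{ij}/u_{13}$, i.e.\ the projective class of the tuple $(q_i-q_j)_{i<j}$; it does not by itself produce any $q$. To obtain a representative one must still determine the scalar $u_{13}$, at least up to $\cO^*$, and then solve for $q$. The paper does this by invoking Proposition~\ref{prop:discriminant}, which forces $u_{13}^{d'(d'-1)}$ into an explicit coset of $\cO^*$; extracting $d'(d'-1)$-th roots then amounts to enumerating $\cO^*/(\cO^*)^{d'(d'-1)}$, which requires a set of effective generators for $\cO^*$. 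This is precisely where the hypothesis $\cO\subset\bar{\Q}$ is used (see Remark~\ref{rem:email}). The paper then reconstructs $q$ by inverting a Vandermonde system and verifies $\cO[q]=\cO[r]$. None of this is present in your proposal. The remark that ``the Galois-theoretic descent above is then constructive as well'' conflates a uniqueness statement (two admissible $q$'s with the same ratio-tuple satisfy $q'-uq\in\cO$) with a construction: the descent says nothing about how to produce even one $q$ from a given ratio-tuple, and your argument never explains where the restriction $\cO\subset\bar{\Q}$ actually enters.
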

	
		After a series of work, Gy\H{o}ry \cite{Gyory1984} proved that
		there was such a \emph{finite} list $\{s_1,\ldots,s_N\}$
		and it could be determined effectively when $\cO\subset\bar{\Q}$ 
		or $\cO$ belonged to certain restricted classes of finitely generated domains. The assertion that $c_7$
		could be explicitly given and depended only on $d(\cO,r)$ was proved 
		later by Evertse and Gy\H{o}ry \cite{EG1985}. Strictly speaking,
		they represented $\cO$ and $\cO^*$
		after choosing a transcendence basis and a finite set of valuations.
		 Then they worked on the general theory of decomposable form equations
		 and obtained a variant of Theorem~\ref{thm:EG} as an immediate consequence. In 
		 this section,
		 we briefly explain the very simple aspects of their work by using the unit 
		 equation 
		 $x+y=1$
		 (or actually $ax+by=1$ with parameters $(a,b)$)
		 directly to obtain Theorem~\ref{thm:EG}. We will avoid all the extra technical 
		 details
		  for the more general decomposable form equations; the interested readers
		  can refer to \cite{Gyory1984}, \cite{EG1985} and the references there.

	Before proving Theorem~\ref{thm:EG}, we note the following immediate corollary 
	which answers the two questions of Bell and Hare 
	mentioned in (iii) at the beginning of this paper:
	\begin{corollary}\label{cor:2 questions}
	Fix an algebraic integer $r\notin \Z$. The
	number of Pisot numbers $q$ satisfying $\Z[q]=\Z[r]$ could be 
	bounded uniformly in the degree of $r$. Moreover, all
	such Pisot numbers can be determined effectively.
	\end{corollary}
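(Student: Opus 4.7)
The plan is to apply Theorem~\ref{thm:EG} directly with $\cO=\Z$. Since $\Z^{*}=\{\pm 1\}$, we obtain $N\leq c_7$ numbers $s_1,\ldots,s_N$ with $\Z[s_i]=\Z[r]$ such that every $q$ satisfying $\Z[q]=\Z[r]$ can be written as $q=\varepsilon s_i+n$ for some $1\leq i\leq N$, some $\varepsilon\in\{\pm 1\}$, and some $n\in\Z$. By Remark~\ref{rem:Dirichlet 2}, the resulting $c_7$ depends only on $[\Q(r):\Q]$, which is what we want for the final uniform bound.

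The next step is to use the Pisot hypothesis to cut $n$ down to finitely many possibilities. Let $d=[\Q(r):\Q]=[\Q(s_i):\Q]$, and observe that $d\geq 2$ since $r\notin\Z$. Pick any nontrivial $\Q$-embedding $\sigma\colon \Q(s_i)\hookrightarrow\C$. Then $\sigma(q)=\varepsilon\sigma(s_i)+n$ is a conjugate of $q$ distinct from $q$, so the Pisot condition forces $|\varepsilon\sigma(s_i)+n|<1$; writing $\sigma(s_i)=a+bi$, this yields $(n+\varepsilon a)^2+b^2<1$, hence $\bigl|n+\varepsilon\,\mathrm{Re}(\sigma(s_i))\bigr|<1$, which confines $n$ to an open real interval of length $2$ and therefore to at most two integer values. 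Summing over the at most $2N$ pairs $(s_i,\varepsilon)$ yields at most $4c_7$ Pisot candidates overall, giving the desired uniform bound in terms of $[\Q(r):\Q]$.

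For the effectiveness statement, since $\Z\subset\bar{\Q}$, the second part of Theorem~\ref{thm:EG} supplies an effective algorithm producing $s_1,\ldots,s_N$. For each pair $(s_i,\varepsilon)$, the explicit bound above singles out at most two integers $n$, for each of which one forms $q=\varepsilon s_i+n$ and tests whether it is a Pisot number by examining the roots of its minimal polynomial over $\Q$. This is a finite and decidable procedure, so the complete list of Pisot $q$ with $\Z[q]=\Z[r]$ is effectively computable.

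I do not anticipate a genuine obstacle: the real content is packaged inside Theorem~\ref{thm:EG}, and the Pisot hypothesis enters only through the mild observation that any non-identity embedding produces a conjugate of absolute value strictly less than $1$, which collapses the free $\Z$-translate in the representation $q=\varepsilon s_i+n$. The only point that requires any care is translating the $d(\Z,r)$ dependence of $c_7$ into a dependence on $[\Q(r):\Q]$, and this is exactly what Remark~\ref{rem:Dirichlet 2} delivers.
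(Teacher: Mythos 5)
Your proof is correct and takes essentially the same approach as the paper: apply Theorem~\ref{thm:EG} with $\cO=\Z$, write $q=\varepsilon s_i+n$ with $\varepsilon\in\{\pm1\}$, and use a nontrivial embedding together with the Pisot condition $|\sigma(q)|<1$ to confine $n$ to at most two integers, giving at most $4c_7$ candidates and effectiveness via the effectively computable list $s_1,\ldots,s_N$. The only difference is that you spell out the real-part computation slightly more explicitly, which the paper leaves implicit.
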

	\begin{proof}
		Apply Theorem~\ref{thm:EG} with $\cO=\Z$ and $K=\Q$, we obtain 
		$c_8$ depending only on $[\Q(r):\Q]$ (see Remark~\ref{rem:Dirichlet 2})
		such that there are $N\leq c_8$ algebraic integers $s_1,\ldots,s_N$
		satisfying the conclusion of Theorem~\ref{thm:EG}. In particular, 
		we have $q=us_i+k$ for some
		$1\leq i\leq N$, $u\in\{\pm 1\}$, and $k\in\Z$. For a fixed $i$ and
		$u$, there are at most two choices of $k$ since we can pick a nontrivial
		embedding $\sigma$ of $\Q(r)$ into $\bar{\Q}$ and use the fact that
		$|\sigma(q)|=|\sigma(us)+k|<1$. Hence there are at most $4c_8$ such Pisot numbers
		$q$. Since a list $\{s_1,\ldots,s_N\}$ can be determined effectively, so can 
		the collection of all such Pisot numbers.
	\end{proof}
		
 \begin{remark}\label{rem:nothing}		
	There is nothing special
	about being a Pisot number in (the proof of) Corollary~\ref{cor:2 questions}. The same
 arguments could be used for any collection of numbers $q$ satisfying some
 	 appropriate 
	boundedness condition that could be much weaker than conditions in the definition
	of Pisot numbers. For instance, we may consider the collection of $q$ such that
	there is a nontrivial embedding $\sigma$ of $\Q(r)$ satisfying
	the condition that $|\sigma(q)|$
	is bounded above by a constant.
	\end{remark}

	We now spend the rest of this section to prove Theorem~\ref{thm:EG}. We may assume
	that $r\notin K$, otherwise Theorem~\ref{thm:EG} is obvious. Let $L/K$ be the Galois 
	closure of
	$K(r)/K$. For any two distinct $K$-embeddings $\sigma$ and $\eta$
	of $K$ into $L$, write $\cO_{\sigma,\eta}=\cO[\sigma(r),\eta(r)]$. For 
	simplicity, 
	write $d=d(\cO,r)$ and let $c_9(d),c_{10}(d),\ldots$
	denote positive constants depending only on $d$. Let $q$ be integral over $\cO$ 
	such that $\cO[q]=\cO[r]$. We have that 
	for every two distinct $K$-embeddings $\sigma$ and $\eta$ of $K(r)$ into $L$, there is
	a unit $u_{\sigma,\eta}$ of $\cO_{\sigma,\eta}^*$ such that:
	\begin{equation}\label{eq:unit 1}
	\sigma(q)-\eta(q)=u_{\sigma,\eta}(\sigma(r)-\eta(r)).
	\end{equation}
	
	\textbf{Case 1:} $[K(r):K]=2$. We can uniquely write $q=a_0+a_1r$
	with $a_0,a_1\in\cO$. By \eqref{eq:unit 1}
	with $\sigma=\id$ and $\eta$ is the nontrivial $K$-automorphism of $K(r)$,
	we have that $a_1\in\cO^*$. This proves Theorem~\ref{thm:EG} and we may even take
	$\{s_1,\ldots,s_N\}=\{r\}$. 
	
	\textbf{Case 2:} $[K(r):K]>2$. 
	Let $G_{r}$ be the subgroup of $L^{*}$
	generated by all the groups $\cO_{\sigma,\tau}^{*}$ and elements
	of the form $\sigma(r)-\eta(r)$ for any two distinct $K$-embeddings
	$\sigma$ and $\tau$ of $K(r)$ into $L$. By the definition of $d=d(\cO,r)$,
	 the rank of $G_r$ is bounded
	by a constant $c_9(d)$.
	
	For any
	two \emph{distinct nontrivial} $K$-embeddings
	$\sigma$ and $\eta$, Siegel's identity:
	$$\frac{q-\sigma(q)}{\eta(q)-\sigma(q)}-\frac{q-\eta(q)}{\eta(q)-\sigma(q)}=1$$
	gives that:
	$$(x_{q,\sigma,\eta},y_{q,\sigma,\eta}):=\left(\frac{q-\sigma(q)}{\eta(q)-\sigma(q)},
	-\frac{q-\eta(q)}{\eta(q)-\sigma(q)}\right)$$
	is a solution of the unit equation $x+y=1$
	to be solved for $(x,y)\in G_r^2$. Hence by
	Corollary~\ref{cor:2 variables}, there is a 
	finite
	set $S_r\subseteq L^{*}$
	whose cardinality is bounded above by a constant $c_{10}(d)$
	such that for every $q$ satisfying $\cO[q]=\cO[r]$
	and any two distinct nontrivial $K$-embeddings $\sigma,\eta$ of $K(r)$
	into $L$, we have:
	$$\left\{\frac{q-\sigma(q)}{\eta(q)-\sigma(q)},
    \frac{q-\eta(q)}{\eta(q)-\sigma(q)},\frac{q-\sigma(q)}{q-\eta(q)}\right\}\subseteq S_r.$$
	Now for any two pairs of distinct embeddings $(\sigma_1,\eta_1)$ and $(\sigma_2,\eta_2)$,
	using:
	$$\frac{\sigma_1(q)-\eta_1(q)}{\sigma_2(q)-\eta_2(q)}=\frac{(\sigma_1(q)-q)+(q-\eta_1(q))}
	{(\sigma_2(q)-q)+(q-\eta_2(q))}$$
	we conclude that there are only finitely many possibilities for $\displaystyle\frac{\sigma_1(q)-\eta_1(q)}{\sigma_2(q)-\eta_2(q)}$.
	
  Let $d'=[K(r):K]$ and $\bP:=\bP^{d'(d'-1)-1}$ with coordinates $x_{(\sigma,\tau)}$
	indexed by pairs $(\sigma,\tau)$ of distinct $K$-embeddings
	of $K(r)$ into $L$. We conclude that there is a finite set $T_r\subseteq \bP(L)$
 whose cardinality is bounded above by a constant $c_{11}(d)$
 such that for every algebraic integer $q$ satisfying $\cO_K[q]=\cO_K[r]$, the
 corresponding point $((\sigma(q)-\eta(q)))_{(\sigma,\eta)}$ 
	belongs to $T_r$. 
	
	Now if there are more than $c_{11}(d)$ many $q$ such that $\cO[q]=\cO[r]$, then there
	are at least two denoted by $q$ and $q^*$ such that the two points
	$((\sigma(q)-\eta(q)))_{(\sigma,\eta)}$ and $((\sigma(q^*)-\eta(q^*))_{(\sigma,\eta)}$
	in $\bP(L)$ coincide. In other words, there exists $u\in L^*$ such that:
	\begin{equation}\label{eq:q and q*}
  \frac{\sigma(q)-\eta(q)}{\sigma(q^*)-\eta(q^*)}=u\ \text{for any distinct $\sigma$ and $\eta$.}
	\end{equation}
	By \eqref{eq:unit 1}, we have that $u\in\cO_{\sigma,\eta}^*$ for any
	distinct $\sigma$ and $\eta$. 
	Since $K(r)=K(q)=K(q^*)$, by lifting to $\Gal(L/K)$ we have:
	$\displaystyle\frac{\tilde{\sigma}(q)-\tilde{\eta}(q)}{\tilde{\sigma}(q^*)-\tilde{\eta}(q^*)}=u$ 
	for every $\tilde{\sigma},\tilde{\eta}\in \Gal(L/K)$
	such that $\tilde{\sigma}\Gal(L/K(r))\neq\tilde{\eta}\Gal(L/K(r))$.	This implies $u$
	is invariant under $\Gal(L/K)$. Hence $u\in \cO^*$ thanks to integral closedness
	of $\cO$.
	Now \eqref{eq:q and q*} with $\sigma=\id$ implies that 
	the element $q-uq^*\in K(r)$
	 is invariant under every $K$-embedding of $K(r)$. Hence $q-uq^*\in\cO$. This proves
	 the first assertion in Theorem~\ref{thm:EG}.
	 
	For the remaining assertion, note that $\cO\subset \bar{\Q}^*$
	and $K$ is now a number field.  By Theorem~\ref{thm:effective} the finite 
	sets $S_r\subseteq L^*$ and $T_r\subseteq \bP(L)$
	can be determined effectively. Now we \emph{fix} a point $(t_{(\sigma,\eta)})\in T_r$ and 
	show how to effectively determine
	all algebraic integers $q$ such that $\cO[q]=\cO[r]$
	and the two points $(\sigma(q)-\eta(q))$
	and
	$(t_{(\sigma,\eta)})$ in $\bP(L)$ coincide. In other words, we need to determine
	$x\in L^*$ such that the system of equations: 
	\begin{equation}\label{eq:x}
	\sigma(q)-\eta(q)=t_{(\sigma,\eta)}x\ \text{for any $K$-embeddings $\sigma\neq\eta$
	of $K(r)$}
	\end{equation}
	could possibly yield a solution $q$ satisfying $\cO[q]=\cO[r]$. 
	
	Note that
	if $x\in L^*$ is a choice such that \eqref{eq:x}
	has a solution $q$ satisfying $\cO[q]=\cO[r]$
	then for every unit $w\in\cO^*$,
	$xw$ is another choice with a solution $qw$
	of \eqref{eq:x} satisfying $\cO_K[qw]=\cO_K[r]$. 
	Hence it suffices to determine
	the images of all such $x$ inside the quotien $L^*/\cO^*$.
	Write $t=\prod_{(\sigma,\eta)} t_{\sigma,\eta}$.
	The system
	\eqref{eq:x}
	together with Proposition~\ref{prop:discriminant}
	gives:
	\begin{equation}\label{eq:compare discriminant}
	x^{d'(d'-1)}t \in \disc_K(r)\cO^*.
	\end{equation}
	Denote $(\cO^*)^{d'(d'-1)}:=\{w^{d'(d'-1)}:\ w\in\cO^*\}$. Let
	$u_1,\ldots,u_M\in\cO_K^*$ be a choice of representatives for
	$\cO^*/(\cO^*)^{d'(d'-1)}$. To make such a choice, we simply
	need the group of roots of unity in $K$ and
	a choice of generators for the ``free part'' of $\cO^*$. This
	can be done effectively (compare Remark~\ref{rem:email}). Now \eqref{eq:compare discriminant}
	implies that 
	$$x\in \left(\frac{u_i\disc_K(r)}{t}\right)^{1/(d'(d'-1))} \cO^*$$
	for some $1\leq i\leq M$. Hence the list
	of possibilities for the image of $x$ in $L^*/\cO^*$
	can be effectively determined.
	
	To finish the proof, given $x\in L^*$, we explain
	how to find all solutions $q$ of
	\eqref{eq:x} satisfying $\cO[q]=\cO[r]$. Write
	$$q=a_0+a_1r+\ldots+a_{d'-1}r^{d'-1}$$
	and we solve for $(a_0,\ldots,a_{d'-1})$ in the free $\cO$-module $\cO^{d'}$
	instead. Write $t_{\eta}=t_{\sigma,\eta}$ if
	$\sigma$ is the identity. Restrict \eqref{eq:x} to the smaller system:
	\begin{equation}\label{eq:a_i}
	q-\eta(q)=t_{\eta}x\ \text{for any nontrivial $K$-embedding $\eta$ of $K(r)$}
	\end{equation}
	Then we have a linear system of $(d'-1)$ equations in
	the variables $a_1,\ldots,a_{d'-1}$. The rows of the coefficient matrix $C$
	are of the form
	$$(r-\eta(r),r^2-\eta(r^2),\ldots,r^{d'-1}-\eta(r^{d'-1}))$$
	where $\eta$ ranges over all nontrivial $K$-embeddings of $K(r)$. It is easy to see that
	$C$ is invertible, as follows. Let $D$ be the $d'\times d'$ Vandermonde matrix
	whose rows are of the form:
	$$(1,\eta(r),\ldots,\eta(r)^{d'-1})$$
	where $\eta$ ranges over all (including the identity) $K$-embeddings of $K(r)$. In
	particular, the first row of $D$ is $(1,r,r^2,\ldots,r^{d'-1})$. By applying elementary
	column operations to transform the top row to $(1,0,\ldots,0)$, we have that:
	$$\det(D)=\pm \det(C).$$
	Hence $\det(C)\neq 0$. Therefore there is a unique solution
	$(a_1,\ldots,a_{d'-1})\in \C^{d'-1}$. Now it depends on whether
	this unique solution $(a_1,\ldots,a_{d'-1})$ belongs to $\cO^{d'-1}$
	and whether
	$$q'=a_1r+\ldots+a_{d'-1}r^{d'-1}$$
	satisfies $\cO[q']=\cO[r]$.
	If that is the case, any $q=a_0+q'$ for any 
	$a_0\in\cO$
	satisfies $\cO[q]=\cO[r]$. Otherwise, our initial
	choices of $(t_{(\sigma,\tau)})$ and $x$
	do not yield any $q$ satisfying $\cO[q]=\cO[r]$.
	Verifying the condition $\cO[q']=\cO[r]$ could be
	done effectively by, for example, checking if the change of coordinate matrices
	between $\{1,q',\ldots,q'^{d'-1}\}$
	and $\{1,r,\ldots,r^{d'-1}\}$
	is in $\GL_{d'}(\cO)$.
	This finishes the proof of
	Theorem~\ref{thm:EG}.

	\begin{remark}\label{rem:email}
	In fact, the sets $S_r$ and $T_r$ in the proof
	above can be determined effectively thanks to
	the results of Evertse and Gy\H{o}ry \cite{EGunit2013} mentioned in
	Remark~\ref{rem:EGunit}. However, we are grateful to Professor Evertse for the
	explanation that results in \cite{EGunit2013} are not enough
	to effectively determine a list $\{s_1,\ldots,s_N\}$   
	in Theorem~\ref{thm:EG} for an arbitrary integrally closed finitely generated 
	domain $\cO$. The problem is that in the above proof, we work with
	$L^*/\cO^*$, hence require a list of generators of $\cO^*$. When
	$\cO\subset\bar{\Q}$ (i.e. $\cO$ is the ring of $S$-integers in a number field), 
	generators of $\cO^*$ can be determined effectively. However, this is not known
	for a general $\cO$ (see \cite[pp.~353]{EGunit2013}). 
	\end{remark}	
	
	\section{Proof of Theorem~\ref{thm:main 2}} 
	\subsection{Notation and some preliminary results}\label{subsec:notation}
	Throughout this section, fix an integrally closed finitely generated domain
	$\cO$ with fraction field $K$. Fix
	$r$ and $q$ that are integral over $\cO$ and satisfy $\{r^n,q^n:\ n\in\N\}\cap \cO=\emptyset$. Let $L$ denote
	the Galois closure of $K(q,r)$. 
	Write $d=d(\cO,q,r)$ defined in Definition~\ref{def:dAab}, and let $c_{12}(d),c_{13}(d),\ldots$ denote positive constants depending only 
	on $d$. Define $Q\in\N$ (respectively $R\in\N$) to be the smallest 
	positive 
	integer satisfying
	$K(q^Q)\subseteq K(q^n)$ (respectively $K(r^R)\subseteq K(r^n)$) for every $n\in\N$. We have:
	 \begin{lemma}\label{lem:Q and R}
	 The exists a bound $c_{12}(d)$ depending only on $d$ for $Q$ and $R$.
	 \end{lemma}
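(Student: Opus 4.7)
The plan is to translate the minimality defining $Q$ into a statement about the orders of roots of unity of the form $\sigma(q)/q$ arising in the Galois closure $L$, and then to bound these orders via the unit rank conditions packaged into $d(\cO,q,r)$.

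First I would identify $F := \bigcap_n K(q^n)$, verify that it is a subfield of $K(q)$ containing $K$, and show that $F = K(q^Q)$ where $Q$ is the smallest positive integer with $q^Q \in F$. For any $\sigma \in \Gal(L/F)$ one has $\sigma(q^Q) = q^Q$, so $\omega_\sigma := \sigma(q)/q$ is a $Q$-th root of unity. I would next prove that these $\omega_\sigma$ actually generate $\mu_Q$ inside $L^*$: if instead they generated some $\mu_{Q'}$ with $Q' \mid Q$ and $Q' < Q$, then $\sigma(q)^{Q'} = q^{Q'}$ for every $\sigma \in \Gal(L/F)$, forcing $q^{Q'} \in F$ and contradicting the minimality of $Q$. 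In particular $\mu_Q \subseteq L$, and $Q = \mathrm{lcm}\{\ord(\omega_\sigma) : \sigma \in \Gal(L/F)\}$; hence the task reduces to bounding $N := |\mu(L)|$, since $Q$ divides $N$.

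To bound $N$, note first that $[L:K] \leq [K(q,r):K]!$, which by part (a) of the definition of $d$ is at most $(d^2)!$, a quantity depending only on $d$. Writing $\widetilde{\cO}_L$ for the integral closure of $\cO$ in $L$, the fact that roots of unity are integral over $\Z \subseteq \cO$ gives $\zeta_N \in \widetilde{\cO}_L$, and so $\Z[\zeta_N]^* \hookrightarrow \widetilde{\cO}_L^*$. By Dirichlet's unit theorem, $\mathrm{rank}(\Z[\zeta_N]^*) = \phi(N)/2 - 1$ for $N > 2$, so $\mathrm{rank}(\widetilde{\cO}_L^*) \geq \phi(N)/2 - 1$. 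On the other hand, the generalisation of Dirichlet's unit theorem to finitely generated $\Z$-domains (due to Roquette, see \cite[Chapter~2]{LangFundamentals}) yields a bound of the form $\mathrm{rank}(\widetilde{\cO}_L^*) \leq [L:K]\bigl(\mathrm{rank}(\cO^*) + 1\bigr) - 1$, since the relevant ``infinite'' places of $L$ each sit above a place of $K$ with multiplicity at most $[L:K]$. Applying (b) of the definition of $d$ with $\sigma = \tau = \mathrm{id}$ yields $\mathrm{rank}(\cO^*) \leq \mathrm{rank}(\cO[q,r]^*) \leq d$. Combining the two inequalities bounds $\phi(N)$, and hence $N$, explicitly in terms of $d$; since $Q \mid N$, this proves the desired bound for $Q$. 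The identical argument with $r$ in place of $q$ bounds $R$.

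I expect the main obstacle to be the Roquette-type bound on $\mathrm{rank}(\widetilde{\cO}_L^*)$ in terms of $\mathrm{rank}(\cO^*)$ and $[L:K]$. When $\cO$ is an $S$-integer ring in a number field (cf.\ Remark~\ref{rem:Dirichlet}), this is immediate from classical Dirichlet together with the bound on ramification of places. In the general finitely generated setting one must either invoke Roquette's structure theorem carefully or run the analogous argument using an explicit set of valuations on $L$ extending a chosen set of ``infinite'' valuations on $K$; either route is standard once stated, and the remainder of the argument is purely combinatorial.
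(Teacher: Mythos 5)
Your opening reduction — that $\omega_\sigma := \sigma(q)/q$ is a root of unity for $\sigma\in\Gal(L/F)$, and that $Q = \operatorname{lcm}_\sigma \ord(\omega_\sigma)$ so it suffices to bound $s := |\mu(L)|$ — is essentially the paper's first step (phrased there a bit more crudely as $Q\le s$). Where the two arguments part ways is in how $s$ gets controlled. The paper asserts that $s$ is bounded in terms of $|\mu(K)|$ and $[L:K]$, but that claim is actually false as written: with $K=\Q(\zeta_p)^+$, $\cO=\Z[\zeta_p+\zeta_p^{-1}]$, $q=r=\zeta_p$, one has $|\mu(K)|=2$ and $[L:K]=2$ yet $|\mu(L)|=2p$ unbounded. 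So your instinct that the uniformity has to flow through the unit-rank clause (b) of Definition~\ref{def:dAab} is the correct one — it is in fact what makes the lemma true.

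The genuine gap is your inequality $\mathrm{rank}(\widetilde{\cO}_L^*) \le [L:K]\bigl(\mathrm{rank}(\cO^*)+1\bigr)-1$. For $S$-integer rings in global fields this is the Dirichlet--Hasse--Chevalley rank formula plus the bound on the number of places of $L$ above $S$, and is citable. For an arbitrary integrally closed finitely generated $\Z$-domain, Roquette's theorem gives only finite generation of the unit group; there is no off-the-shelf rank formula, and the reduction to a controlled finite set of ``places at infinity'' of $L$ lying over those of $K$ is precisely where the content would be. You flag it as ``standard once stated,'' but it is not, and the argument is incomplete without it. (There is also a smaller mismatch: $d$ bounds the unit rank of the rings $\cO[\sigma(q),\sigma(r),\tau(q),\tau(r)]$, not of the full integral closure $\widetilde{\cO}_L$.) Fortunately the same idea goes through without ever touching $\widetilde{\cO}_L$: since $\mu_Q\subseteq L$, the number field $M:=\Q(\mu_Q)\cap K$ satisfies $[\Q(\mu_Q):M]=[K(\mu_Q):K]\le [L:K]$, so $[M:\Q]\ge \phi(Q)/[L:K]$; integral closedness of $\cO$ gives $\cO_M\subseteq\cO$, whence $d\ge\mathrm{rank}(\cO[q,r]^*)\ge\mathrm{rank}(\cO^*)\ge\mathrm{rank}(\cO_M^*)\ge \frac{1}{2}[M:\Q]-1\ge \phi(Q)/(2[L:K])-1$. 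Combined with $[L:K]\le(d^2)!$ this bounds $\phi(Q)$, and hence $Q$ (and likewise $R$), in terms of $d$ alone, using only classical Dirichlet in a subring of $\cO$ rather than a hypothetical Roquette rank bound.
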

	 \begin{proof}
	 In fact, $Q$ are $R$ are bounded above by the
	 order $s$ of the group of roots of unity in $L^*$, as follows. For every $\sigma\in\Gal(L/K)$,
	if $\sigma(q^Q)=q^Q$ then $\sigma(q)/q$ is a root of unity. Hence we have 
	$\sigma(q^{Q-s})=q^{Q-s}$. This implies $\Gal(L/K(q^Q))\subseteq \Gal(L/K(q^{Q-s}))$,
	and hence $K(q^{Q-s})\subseteq K(q^Q)$ violating the minimality of $Q$ if $Q>s$. The same
	argument also shows $R\leq s$. Finally $s$ could be bounded explicitly in terms of the number of roots of unity in $K$ and $[L:K]$, hence
	in $d$.
		\end{proof}
	
	\begin{remark}\label{rem:Q and R}
	Note that the proof of Lemma~\ref{lem:Q and R} does \emph{not} use the 
	condition $\{q^n,r^n:\ n\in\N\}\cap \cO=\emptyset$.
	\end{remark}
	
	We need the following result for the proof of Theorem~\ref{thm:main 2}; it 
	is a special case of Corollary~\ref{cor:1.1}.
	\begin{proposition}\label{prop:special case}
	There is a constant $c_{13}(d)$ such that for every $n_0\in\N$, there are 
	at most $c_{13}(d)$ many $m\in\N$ (respectively $n\in\N$) such that $\cO[q^m]=\cO[q^{n_0}]$ (respectively $\cO[r^n]=\cO[r^{n_0}]$).
	\end{proposition}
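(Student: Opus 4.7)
I would prove the statement for $q$; the case of $r$ is identical. The strategy is to run the argument of Section~\ref{sec:EvertseGyory} in the special case where the pair $(q,r)$ of that section is replaced by $(q^m, q^{n_0})$, so that the hypothesis $\cO[q^m]=\cO[q^{n_0}]$ plays the role of $\cO[q]=\cO[r]$ in the derivation of \eqref{eq:unit 1}.

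The preliminary observation is that there exist two $K$-embeddings $\tilde\sigma, \tilde\eta$ of $K(q)$ into $L$ such that $\gamma := \tilde\sigma(q)/\tilde\eta(q)$ is not a root of unity. Indeed, if every ratio $\tilde\sigma(q)/q$ with $\tilde\sigma \in \Gal(L/K)$ were a root of unity, their orders would all divide the finite order $s$ of the group of roots of unity in $L^*$; then $\tilde\sigma(q^s) = q^s$ for every $\tilde\sigma$, forcing $q^s \in K$ and hence $q^s \in \cO$ by integral closedness of $\cO$, contradicting $\{q^n : n\in\N\}\cap\cO = \emptyset$.

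Set $\alpha = \tilde\sigma(q)$ and $\beta = \tilde\eta(q)$. For any $m\in\N$ with $\cO[q^m]=\cO[q^{n_0}]$, writing $q^m$ as a polynomial in $q^{n_0}$ over $\cO$ and applying $\tilde\sigma,\tilde\eta$ yields, exactly as in the derivation of \eqref{eq:unit 1}, a unit $u_m \in \cO[\alpha^{n_0},\beta^{n_0}]^* \subseteq \cO[\alpha,\beta]^*$ with
$$\alpha^m - \beta^m = u_m(\alpha^{n_0} - \beta^{n_0}).$$
Dividing by $\beta^m$ and rearranging gives the two-variable unit equation
$$\gamma^m + \bigl(-u_m \beta^{n_0-m}(\gamma^{n_0}-1)\bigr) = 1,$$
in which both summands lie in the subgroup $G \subset L^*$ generated by $\alpha$, $\beta$, $\gamma^{n_0}-1$, and $\cO[\alpha,\beta]^*$. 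By Definition~\ref{def:dAab}, the rank of $G$ is bounded by $d+3$.

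Corollary~\ref{cor:2 variables} applied to $G$ now bounds the number of solutions of $x+y=1$ in $G^2$ by a constant depending only on $d$. Because $\gamma$ is not a root of unity, the values $\gamma^m$ for $m\in\N$ are pairwise distinct, so distinct admissible $m$ yield distinct solutions $x = \gamma^m$; this gives the desired $c_{13}(d)$. The main obstacle is securing the initial pair $\tilde\sigma,\tilde\eta$ for which $\gamma$ avoids all roots of unity, which as above is handled by the hypothesis on $q$ together with integral closedness of $\cO$; once that is in place, the rest is a direct transcription of the method of Section~\ref{sec:EvertseGyory}.
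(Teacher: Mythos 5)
Your argument is correct and is essentially the same as the paper's: the paper likewise picks a $\sigma\in\Gal(L/K)$ not fixing any power of $q$ (your $\tilde\sigma,\tilde\eta$ with $\gamma=\tilde\sigma(q)/\tilde\eta(q)$ not a root of unity, with $\tilde\eta$ playing the role of the identity), derives the relation $q^m-\sigma(q^m)=u_m(q^{n_0}-\sigma(q^{n_0}))$, feeds it into a two-variable unit equation over a group whose rank is bounded by $d$, and then observes that distinct admissible $m$ produce distinct values of $q^m/\sigma(q^m)$. The only cosmetic difference is that the paper normalizes the equation as $\frac{1}{q^{n_0}-\sigma(q^{n_0})}(x+y)=1$ with $(x,y)=(u_m^{-1}q^m,-u_m^{-1}\sigma(q^m))$, while you divide by $\beta^m$ first; the substance and the bound obtained via Corollary~\ref{cor:2 variables} are the same.
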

	\begin{proof}
	It suffices to prove the assertion involving the identity 
	$\cO[q^m]=\cO[q^{n_0}]$ since the other assertion
	involving $\cO[r^n]=\cO[r^{n_0}]$ is completely analogous. 
	We use the same idea as in the proof of Theorem~\ref{thm:EG}. Since
	$q^n\notin K$ for every $n\in\N$, there is
	$\sigma\in \Gal(L/K)$
	such that $\sigma$ does not fix $q^n$ for every $n\in\N$. Suppose
	$\cO[q^m]=\cO[q^{n_0}]$, then there is a unit $u_{m}$
	of the ring $\cO[q,\sigma(q)]$
	such that 
	$$q^m-\sigma(q^m)=u_m(q^{n_0}-\sigma(q^{n_0})).$$
	Let $G$ be the subgroup of $L^*$ generated by the units in $\cO[q,\sigma(q)]$, $q$ and
	$\sigma(q)$. Then the rank of $G$ is bounded in terms of $d$ only. We have that
	$(u_m^{-1}q^m,-u_m^{-1}\sigma(q^m))\in G^2$
	is a solution of the equation:
	$$\frac{1}{q^{n_0}-\sigma(q^{n_0})}(x+y)=1.$$
	By Corollary~\ref{cor:2 variables},
	there are at most $c_{13}(d)$ possibilities
	for $\displaystyle\frac{q^m}{\sigma(q^m)}$.
	
	Hence, if there are more than $c_{13}(d)$ many $m$ such that
	$\cO[q^m]=\cO[q^{n_0}]$ then there are $m_1<m_2$
	such that 
	$$\frac{q^{m_1}}{\sigma(q^{m_1})}=\frac{q^{m_2}}{\sigma(q)^{m_2}}.$$
	In other words, $\sigma$ fixes $q^{m_2-m_1}$, contradicting the
	choice of $\sigma$.
	\end{proof}
	
	Finally, we have the following which can be proved using a similar idea:
	\begin{proposition}\label{prop:same field}
		There is a constant $c_{14}(d)$ such that
		if $K(r^R)\neq K(q^Q)$
		then there are at most $c_{14}(d)$ pairs
		$(m,n)\in\N^2$ such that 
		$\cO[q^m]=\cO[r^n]$.
	\end{proposition}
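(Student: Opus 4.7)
The plan is to translate the hypothesis $K(r^R) \neq K(q^Q)$ into the existence of a Galois automorphism that treats $q$ and $r$ asymmetrically, then plug it into a unit-equation argument in the spirit of Proposition~\ref{prop:special case}. The first key input is already embedded in the proof of Lemma~\ref{lem:Q and R}: an automorphism $\sigma \in \Gal(L/K)$ fixes $q^Q$ if and only if $\sigma(q)/q$ is a root of unity, which forces $\sigma(q^s)=q^s$ and hence $K(q^Q)=K(q^s)$ where $s = |\mu_L|$; similarly $K(r^R)=K(r^s)$. So $\Gal(L/K(q^Q))$ (resp.\ $\Gal(L/K(r^R))$) is exactly the set of $\sigma$ for which $\sigma(q)/q \in \mu_L$ (resp.\ $\sigma(r)/r \in \mu_L$). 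Since by hypothesis these two subgroups are distinct, one is not contained in the other; after swapping the roles of $q$ and $r$ if necessary, I may pick $\sigma \in \Gal(L/K(q^Q)) \setminus \Gal(L/K(r^R))$. Set $\zeta := \sigma(q)/q$ (a root of unity of order dividing $s$) and $\alpha := \sigma(r)/r$ (not a root of unity).

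Now assume $\cO[q^m]=\cO[r^n]$. The equality $K(q^m)=K(r^n)$ means $\sigma$ fixes $q^m$ iff it fixes $r^n$; but $\alpha$ is not a root of unity, so $\sigma$ never fixes $r^n$, forcing $\zeta^m \neq 1$. Expanding $r^n$ as an $\cO$-polynomial in $q^m$ and vice versa, then applying $\sigma$ as in the proofs of Theorem~\ref{thm:EG} and Proposition~\ref{prop:special case}, yields
\[
q^m(1-\zeta^m) \;=\; u_{m,n}\, r^n(1-\alpha^n)
\]
for some unit $u_{m,n}$ in $\cO[q,r,\sigma(q),\sigma(r)]$. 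By Definition~\ref{def:dAab}(b), the multiplicative subgroup $G \subset L^*$ generated by $q,r,\sigma(q),\sigma(r)$ together with $\cO[q,r,\sigma(q),\sigma(r)]^*$ has rank bounded purely in terms of $d$. Partitioning the pairs $(m,n)$ according to the residue of $m$ modulo $\mathrm{ord}(\zeta) \leq s$ (at most $c_{12}(d)$ classes), within each class $\zeta^m$ equals a fixed $\zeta_0 \neq 1$, so $1-\zeta_0 \in L^*$ is fixed, and the displayed equation becomes the two-variable unit equation
\[
(1-\zeta_0)\,\frac{q^m}{u_{m,n}\,r^n} \;+\; \alpha^n \;=\; 1
\]
with unknowns $Z_1 := q^m/(u_{m,n}\,r^n) \in G$ and $Z_2 := \alpha^n \in G$. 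Corollary~\ref{cor:2 variables} then caps the number of solutions $(Z_1,Z_2)$ by a constant depending only on $d$.

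It remains to check that each solution $(Z_1,Z_2)$ comes from at most one $(m,n)$. Since $\alpha$ is not a root of unity, $Z_2=\alpha^n$ determines $n$ uniquely; with $n$ fixed, two values $m_1 \neq m_2$ producing the same $Z_1$ would give $q^{m_1-m_2} = u_{m_1,n}/u_{m_2,n} \in \cO^*$, contradicting the standing hypothesis $\{q^N : N \in \N\} \cap \cO = \emptyset$. Summing over the boundedly many residue classes yields the desired constant $c_{14}(d)$. The main conceptual step is the opening Galois-theoretic identification $K(q^Q)=K(q^s)$, which converts the field inequality in the hypothesis into a clean statement about whether $\sigma(q)/q$ is a root of unity; once that is in hand, the proof tracks Proposition~\ref{prop:special case} almost verbatim.
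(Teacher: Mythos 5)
Your approach is essentially the paper's, run in mirror image: the paper chooses $\sigma$ fixing $r^R$ but no power of $q$ (so the unit equation pins down the possible values of $m$, and Proposition~\ref{prop:special case} then bounds the number of $n$ per $m$), whereas you choose $\sigma$ fixing $q^Q$ but no power of $r$, so that your unit equation pins down $n$. The Galois-theoretic opening, the passage to a two-variable unit equation over a group $G$ of rank bounded in $d$, and the partition into residue classes are all sound, and $Z_2=\alpha^n$ does determine $n$ uniquely because $\alpha$ is not a root of unity.

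The gap is in the last step, where you claim each solution $(Z_1,Z_2)$ comes from at most one $(m,n)$. From ``same $Z_1$'' you derive $q^{m_1-m_2}=u_{m_1,n}/u_{m_2,n}$ and assert this lies in $\cO^*$; but the $u_{m_i,n}$ are only units of $\cO[q,\sigma(q),r,\sigma(r)]$, so the quotient is a unit of that larger ring, and there is no reason for $q^{m_1-m_2}$ to lie in $K$, let alone in $\cO$. (Indeed, since $\cO$ is integrally closed and $q$ is integral over $\cO$, the standing hypothesis $q^N\notin\cO$ forces $q^N\notin K$ for all $N$, so the membership $q^{m_1-m_2}\in\cO^*$ you write down is simply false.) Nor is the Galois escape from Proposition~\ref{prop:special case} available: there the contradiction came from ``$\sigma$ fixes $q^{m_1-m_2}$,'' but your $\sigma$ \emph{does} fix $q^Q$, and $m_1\equiv m_2\pmod{\ord(\zeta)}$ actually forces $\sigma$ to fix $q^{m_1-m_2}$, so no contradiction arises. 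Note also that, within a residue class, the relation $(1-\zeta_0)Z_1=1-Z_2$ makes $Z_1$ a function of $Z_2$, so the unit-equation count is really just a count of admissible $n$'s and gives no information about $m$ by itself. The correct fix is exactly what the paper does for the other variable: once $n$ is fixed, any two admissible $m$'s satisfy $\cO[q^{m_1}]=\cO[r^n]=\cO[q^{m_2}]$, so Proposition~\ref{prop:special case} bounds their number by $c_{13}(d)$. Multiplying by your residue-class count (at most $s\leq c_{12}(d)$) and the unit-equation count from Corollary~\ref{cor:2 variables} yields the desired $c_{14}(d)$.
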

	\begin{proof}
		We may assume $K(q^Q)\nsubseteq K(r^R)$, hence $\Gal(L/K(r^R))$
		is not a subgroup of $\Gal(L/K(q^Q))$. Thus we can choose
		$\sigma\in \Gal(L/K(r^R))$ such that $\sigma$ does not fix
		$q^n$ for any $n\in\N$. Now it suffices to prove that there is 
		a constant
		$c_{15}(d)$ such that for every \emph{fixed}
		$0\leq \ell\leq R-1$
		there are at most $c_{15}(d)$ finitely many
		pairs $(m,n)$ satisfying $\cO[q^m]=\cO[r^n]$
		\emph{and} $n\equiv \ell$ modulo $R$. Once this is done, the
		desired $c_{14}(d)$ can be taken to be $Rc_{15}(d)$ (note Lemma~\ref{lem:Q and R}).
		
		For every such $(m,n)$, write $n=\tilde{n}R+\ell$.
		As before, there is a unit $u_{m,n}$ of the ring
		$\cO[q^m,\sigma(q^m),r^n,\sigma(r^n)]\subseteq \cO[q,\sigma(q),r,\sigma(r)]$ such that:
		$$q^m-\sigma(q^m)=u_{m,n}(r^n-\sigma(r^n))=u_{m,n}r^{\tilde{n}R}(r^{\ell}-\sigma(r^{\ell})).$$
	
	  Let $G$ be the subgroup of $L^*$ generated by the units of the ring
	  $\cO[q,\sigma(q),r,\sigma(r)]$, $q$,
	$\sigma(q)$, and $r$. Then the rank of $G$ is bounded in terms of $d$ only. 
	We have that
	$\displaystyle\left(\frac{q^m}{u_{m,n}r^{\tilde{n}R}},-\frac{\sigma(q^m)}{u_{m,n}r^{\tilde{n}R}}\right)\in G^2$
	is a solution of the equation:
	$$\frac{1}{r^{\ell}-\sigma(r^{\ell})}(x+y)=1.$$
	By Corollary~\ref{cor:2 variables},
	there is a constant $c_{16}(d)$ such that there are at most $c_{16}(d)$ possibilities
	for $\displaystyle\frac{q^m}{\sigma(q^m)}$. 
	
	Recall the constant $c_{13}(d)$ in Proposition~\ref{prop:special case}, define
	$c_{15}(d):=c_{13}(d)c_{16}(d)$. Now if there are more than $c_{15}(d)$ pairs
	$(m,n)$ with $\cO[q^m]=\cO[r^n]$ and $n\equiv \ell$ modulo $R$,
	then Proposition~\ref{prop:special case}
	implies that those pairs yield $N>c_{16}(d)$ many pairs
	denoted $(m_1,n_1),\ldots,(m_N,n_N)$
	such that $m_1,\ldots,m_N$ are distinct. We may assume $m_1<\ldots<m_N$. By
	the property of $c_{16}(d)$ as the upper bound for the possibilities
	of $\displaystyle\frac{q^m}{\sigma(q^m)}$, there exist
	$1\leq i<j\leq N$ such that:
	$$\frac{q^{m_i}}{\sigma(q^{m_i})}=\frac{q^{m_j}}{\sigma(q^{m_j})}.$$
	In other words, $\sigma$ fixes $q^{m_j-m_i}$, contradicting the choice of 
	$\sigma$. This finishes the proof.
	\end{proof}

	
	\subsection{Proof of Theorem~\ref{thm:main 2}}
	By Proposition~\ref{prop:same field}, we may assume that
	$K(q^Q)=K(r^R)$; denote this field by $K^o$.
	As in the proof of
	Proposition~\ref{prop:same field}, it suffices to fix $k,\ell$ with  
	$0\leq k\leq Q-1$
	and $0\leq \ell\leq R-1$, and show that there are at most $c_{17}(d)$
	pairs $(m,n)\notin\cA_{\cO,q,r}\cup\cB_{\cO,q,r}\cup\cC_{\cO,q,r}$ satisfying $\cO[q^m]=\cO[r^n]$, 
	$m\equiv k$ modulo $Q$, and $n\equiv \ell$ modulo $R$. Once this is done,
	the desired constant $c_3(d)$ in the conclusion of Theorem~\ref{thm:main 2}
	can be taken to be $QRc_{17}(d)$ (note Lemma~\ref{lem:Q and R}). The convenience of doing this is that we 
	can fix
	$F:=K(q^k)=K(q^m)=K(r^n)=K(r^{\ell})$. We have the following tower of fields:
	$$K\subsetneq K^o\subseteq F\subseteq L.$$
	
	Define: 
	$$W_{k,\ell}:=\{(m,n)\in \N^2:\ \cO[q^m]=\cO[r^n],\ m\equiv k\bmod Q,\ \text{and } n\equiv \ell\bmod R\}.$$ Let $G$ be the subgroup of $L^*$
	generated by the units of the rings $\cO[q,\sigma(q),r,\sigma(r)]$
	for all $\sigma\in \Gal(L/K)$ and by  all
	the conjugates of $q$ and $r$ over $K$.
	As in the proof of Theorem~\ref{thm:EG}, for every $(m,n)\in W_{k,\ell}$ and every $\sigma\in \Gal(L/K)\setminus \Gal(L/F)$ there is a unit $u_{m,n,\sigma}$
	of the ring 
	$\cO[q,\sigma(q^m),r,\sigma(r^n)]\subseteq \cO[q,\sigma(q),r,\sigma(r)]$ such that
	$$0\neq q^m-\sigma(q^m)=u_{m,n,\sigma}(r^n-\sigma(r^n)).$$
	
	Therefore $\displaystyle\bfx_{m,n,\sigma}:=
	\left(\frac{q^m}{\sigma(q^m)},-\frac{u_{m,n,\sigma}r^n}{\sigma(q^m)},
	\frac{u_{m,n,\sigma}\sigma(r^n)}{\sigma(q^m)}\right)$ is a solution of
	the unit equation
	\begin{equation}\label{eq:3 variables}
	x+y+z=1\ \text{with $(x,y,z)\in G^3$.} 
	\end{equation}
	Note that $\bfx_{m,n,\sigma}=\bfx_{m,n,\tau}$
	and $u_{m,n,\sigma}=u_{m,n,\tau}$
	if the two cosets $\sigma \Gal(L/F)$ and $\tau\Gal(L/F)$ coincide.
	Since the rank of $G$ is bounded in terms of $d$ only, by 
	Corollary~\ref{cor:3 variables}, 
	the number of nondegenerate solutions 
	is at most $c_{18}(d)$. We have the
	following:
	\begin{lemma}\label{lem:c18}
	Let $\sigma\Gal(L/F)$ be a coset 
	in $\Gal(L/K)$
	with $\sigma\notin\Gal(L/K^o)$,
	there are at most $c_{18}(d)$ many $(m,n)\in W_{k,\ell}$
	such that the solution $\bfx_{m,n,\sigma}$ of
	\eqref{eq:3 variables} is nondegenerate.
	\end{lemma}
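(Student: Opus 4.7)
The plan is to combine Corollary~\ref{cor:3 variables} with a short Galois-theoretic argument and Proposition~\ref{prop:special case}. Since the rank of $G$ is bounded in terms of $d$, Corollary~\ref{cor:3 variables} directly gives a bound $c'(d)$ on the number of \emph{distinct} nondegenerate triples $(x,y,z)\in G^3$ satisfying $x+y+z=1$. Hence it suffices to bound, uniformly in $d$, the number of pairs $(m,n)\in W_{k,\ell}$ that map to the same nondegenerate triple $\bfx_{m,n,\sigma}$.

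First I would show that any two pairs $(m,n),(m',n')\in W_{k,\ell}$ producing the same triple must share the same first component $m=m'$. Comparing the first coordinates of $\bfx_{m,n,\sigma}$ and $\bfx_{m',n',\sigma}$ yields
$$\frac{q^m}{\sigma(q^m)}=\frac{q^{m'}}{\sigma(q^{m'})},$$
equivalently $\sigma(q^{m-m'})=q^{m-m'}$. If $m\neq m'$, put $N=|m-m'|\in\N$; then $\sigma$ fixes $q^N$, so the fixed field of $\sigma$ contains $K(q^N)$. By the defining minimality of $Q$ in Section~\ref{subsec:notation} we have $K(q^Q)\subseteq K(q^N)$, whence $\sigma$ fixes $K^o=K(q^Q)$, contradicting the hypothesis $\sigma\notin\Gal(L/K^o)$.

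Having established that $m$ is constant along each fiber, the defining condition $(m,n)\in W_{k,\ell}$ forces $\cO[r^n]=\cO[q^m]$. Thus any two admissible $n,n'$ satisfy $\cO[r^n]=\cO[r^{n'}]$, and Proposition~\ref{prop:special case} applied to $r$ bounds the number of such $n$ by $c_{13}(d)$. Setting $c_{18}(d):=c'(d)\cdot c_{13}(d)$ then yields the desired bound on $(m,n)\in W_{k,\ell}$ with nondegenerate $\bfx_{m,n,\sigma}$.

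The only nontrivial step is the Galois-theoretic injectivity argument above; everything else is a direct invocation of tools already in place. This step itself is short once one remembers the extraction property built into the definition of $Q$, so I expect no serious obstacle in executing the plan.
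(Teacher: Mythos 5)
Your proof is correct, and the essential Galois-theoretic step (comparing the first coordinate of $\bfx_{m,n,\sigma}$ to conclude that $\sigma$ fixes $q^{m-m'}$, then invoking the defining property $K(q^Q)\subseteq K(q^N)$ to contradict $\sigma\notin\Gal(L/K^o)$) coincides with the paper's argument. Where you diverge is in finishing: the paper proves \emph{full} injectivity of $(m,n)\mapsto\bfx_{m,n,\sigma}$ on the nondegenerate set by observing that the case $n_1\neq n_2$ is handled symmetrically---dividing the second and third coordinates gives $\sigma(r^{n_1-n_2})=r^{n_1-n_2}$, and $K(r^R)\subseteq K(r^{n_1-n_2})$ forces $\sigma\in\Gal(L/K^o)$, a contradiction---so the bound from Corollary~\ref{cor:3 variables} applies directly. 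You instead stop at $m$-injectivity and bound each fiber via Proposition~\ref{prop:special case}, which inserts an extra factor $c_{13}(d)$ and one additional dependency, but is perfectly valid since the constant need only depend on $d$. One small wording quibble: the step $K(q^Q)\subseteq K(q^N)$ uses the \emph{defining property} of $Q$ (that this containment holds for all $N$), not its minimality; the minimality of $Q$ plays no role here.
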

	\begin{proof}
	Assume there are more than $c_{18}(d)$ pairs $(m,n)\in W_{\ell}$
	such that $\bfx_{m,n,\sigma}$ is degenerate. 
	Recall that $c_{18}(d)$ is a bound on
	the number of solutions of \eqref{eq:3 variables}, hence there are 
	two distinct pairs $(m_1,n_1)$ and $(m_2,n_2)$ 
	such that
	\begin{equation}\label{eq:c18}
	\bfx_{m_1,n_1,\sigma}=\bfx_{m_2,n_2,\sigma}.
	\end{equation}
	 We may assume $m_1\neq m_2$, the case $n_1\neq n_2$ is completely analogous.
	 Without loss of generality, assume $m_1<m_2$.
	 Equation \eqref{eq:c18} implies:
	 $$\frac{q^{m_1}}{\sigma(q^{m_1})}=\frac{q^{m_2}}{\sigma(q^{m_2})}.$$
	 In other words, $\sigma$ fixes $q^{m_2-m_1}$. Note that $m_2\equiv m_1\equiv k$ modulo $Q$. Hence the field $K(q^{m_2-m_1})\subseteq K^o$
	 is fixed by $\sigma$ and any element of $\Gal(L/K^o)$. Since 
	 $\sigma\notin\Gal(L/K^o)$, the field $K(q^{m_2-m_1})$ is strictly
	 smaller than $K^o=K(q^Q)$, contradicting the choice of $Q$.
	\end{proof}

	There are precisely $[F:K]-[F:K^o]< d$ cosets $\sigma\Gal(L/F)$
	in $\Gal(L/K)$ with $\sigma\notin \Gal(L/K^o)$. We define $c_{19}(d):=dc_{18}(d)$. We now complete the
	proof of Theorem~\ref{thm:main 2} by showing that
	there are at most 
	$c_{19}(d)$ pairs $(m,n)$ in
	$W_{k,\ell}\setminus (\cA_{\cO,q,r}\cup\cB_{\cO,q,r}\cup\cC_{\cO,q,r})$. Assume there
	are more than $c_{19}(d)$ such pairs. By Lemma~\ref{lem:c18}, there exists
	a pair $(\tilde{m},\tilde{n})\in W_{k,\ell}\setminus(\cA_{\cO,q,r}\cup\cB_{\cO,q,r}\cup\cC_{\cO,q,r})$
	such that the solution
	$\bfx_{\tilde{m},\tilde{n},\sigma}$
	of \eqref{eq:3 variables}
	is degenerate for \emph{every} coset $\sigma\Gal(L/F)$
	in $\Gal(L/K)$ with $\sigma\notin\Gal(L/K^o)$. We will show that this
	is impossible.
	
	For any coset $\sigma\Gal(L/F)$ with $\sigma\notin\Gal(L/K^o)$, degeneracy of
	$\bfx_{\tm,\tn,\sigma}$ falls into one of the following
	two types (note that
	we always have $\sigma(q^{\tm})\neq q^{\tm}$ since $\sigma\notin\Gal(L/F)$):
	\begin{itemize}
		\item [Type I: ] $q^{\tm}=u_{\tm,\tn,\sigma}r^{\tn}$ and 
		$u_{\tm,\tn,\sigma}\sigma(r^{\tn})=\sigma(q^{\tm})$. This implies
		that $\sigma$, hence every element in the coset $\sigma\Gal(L/F)$, fixes $\displaystyle \frac{q^{\tm}}{r^{\tn}}$.
		
		\item [Type II: ] $q^{\tm}=-u_{\tm,\tn,\sigma}\sigma(r^{\tn})$
		and $-u_{\tm,\tn,\sigma}r^{\tn}=\sigma(q^{\tm})$. This implies
		that $\sigma$, hence every element in the coset $\sigma\Gal(L/F)$, fixes $\displaystyle q^{\tm}r^{\tn}$.
	\end{itemize}
	 Note that it is possible that both types happen for the same coset $\sigma\Gal(L/F)$. Let $H_1:=\Gal\left(L/K\left(\displaystyle\frac{q^{\tm}}{r^{\tn}}\right)\right)$
	 and $H_2:=\Gal(L/K(q^{\tm}r^{\tn}))$. We have proved the following:
	 \begin{equation}\label{eq:union of 3}
	 \Gal(L/K)=\Gal(L/K^o)\cup H_1\cup H_2.
	 \end{equation} 
	 We need the following well-known lemma in group theory:
	 \begin{lemma}\label{lem:3subgroups}
	 \begin{itemize}
	 \item [(a)] A group cannot be the union of two proper subgroups. 
	 \item [(b)] If a group $A$ is the union of three proper subgroups $A_1$, $A_2$, and $A_3$
	 then $[A:A_i]=2$ for $1\leq i\leq 3$, $A_1\cap A_2=A_1\cap A_3=A_2\cap A_3$
	 is a normal subgroup of $A$ and the quotient is isomorphic to the Klein 
	 four-group.
	 \end{itemize} 
	 \end{lemma}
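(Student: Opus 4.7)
The plan is to prove (a) by the classical covering argument and then bootstrap to (b) via the same trick. For (a), suppose $A = A_1 \cup A_2$ with both proper and pick $a \in A_1 \setminus A_2$ and $b \in A_2 \setminus A_1$; then $ab$ cannot lie in $A_1$ (else $b = a^{-1}(ab) \in A_1$) nor in $A_2$ (else $a = (ab)b^{-1} \in A_2$), contradicting $A = A_1 \cup A_2$.

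For (b), the first step is to identify the three pairwise intersections. Since (a) forbids $A = A_1 \cup A_2$, I can pick $c \in A_3 \setminus (A_1 \cup A_2)$; for any $x \in A_1 \cap A_2$, the element $xc$ cannot lie in $A_1$ or $A_2$ (each would force $c$ into the offending subgroup), so $xc \in A_3$ and hence $x \in A_3$. Symmetry then yields $A_1 \cap A_2 = A_1 \cap A_3 = A_2 \cap A_3 =: N$, together with the disjoint decomposition $A = N \sqcup (A_1 \setminus N) \sqcup (A_2 \setminus N) \sqcup (A_3 \setminus N)$.

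The core of the argument is a translation step. Fix $b \in A_2 \setminus N$ and consider the bijection $\phi_b : A \to A$ sending $x$ to $xb$. A case analysis of the same flavor as (a) --- ``if $yb$ lies in $A_i$ then either $y$ or $b$ is forced into $A_i$'' --- will show that $\phi_b(N) \subseteq A_2 \setminus N$, $\phi_b(A_1 \setminus N) \subseteq A_3 \setminus N$, and $\phi_b(A_3 \setminus N) \subseteq A_1 \setminus N$. Since $\phi_b$ is a bijection of $A$ and its images on three of the four pieces cover all of $A$ except $N$, it must in fact restrict to bijections $N \to A_2 \setminus N$, $A_1 \setminus N \to A_3 \setminus N$, $A_3 \setminus N \to A_1 \setminus N$, and $A_2 \setminus N \to N$. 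In particular all four pieces are in bijection with $N$, giving $[A : A_i] = 2$ for each $i$ and $[A : N] = 4$.

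The conclusion is then immediate: each $A_i$ has index $2$ and is therefore normal, so their intersection $N$ is normal as well. The quotient $A/N$ has order $4$ and contains three distinct subgroups of order $2$, namely the images $A_i / N$; it cannot be cyclic and must therefore be the Klein four-group. The main subtlety is verifying the three cases in the translation step and then leveraging the bijectivity of $\phi_b$ to pin down the image of $A_2 \setminus N$; once those are dispatched, the index computation and the Klein-four identification are routine.
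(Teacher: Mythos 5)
The paper itself offers no proof of this lemma: it labels (a) an exercise and cites Scorza for (b), so you are supplying content the paper omits. Your proof of (a) is correct, and in (b) you correctly establish $N := A_1\cap A_2 = A_1\cap A_3 = A_2\cap A_3$ and the decomposition $A = N \sqcup (A_1\setminus N)\sqcup(A_2\setminus N)\sqcup(A_3\setminus N)$, and the three containments $\phi_b(N)\subseteq A_2\setminus N$, $\phi_b(A_1\setminus N)\subseteq A_3\setminus N$, $\phi_b(A_3\setminus N)\subseteq A_1\setminus N$ are all valid.

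The gap is in the sentence ``its images on three of the four pieces cover all of $A$ except $N$.'' You have \emph{containments}, not equalities, so all you know is that $\phi_b$ maps $N\cup(A_1\setminus N)\cup(A_3\setminus N)$ \emph{into} $A\setminus N$. Taking complements via bijectivity gives the reverse inclusion $N\subseteq\phi_b(A_2\setminus N)$, which is the wrong direction; it does not give $\phi_b(A_2\setminus N)\subseteq N$. Nothing in your three containments (even after upgrading the $A_1/A_3$ ones to equalities, which you can do by running the same argument for $\phi_{b^{-1}}$) rules out $\phi_b$ sending part of $A_2\setminus N$ back into $A_2\setminus N$, and correspondingly nothing forces $\phi_b(N)$ to be all of $A_2\setminus N$. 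So you get $|N|\leq|A_i\setminus N|$ from these maps, but not the crucial reverse inequality, and the index computation stalls.

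To close the gap you need a direct argument that $yb\in N$ whenever $y\in A_2\setminus N$, and this genuinely requires a third auxiliary element. Pick any $a\in A_1\setminus N$. Since $y,b\in A_2$, certainly $yb\in A_2$; suppose toward a contradiction $yb\in A_2\setminus N$. Then $a(yb)\in A_3\setminus N$, by exactly the ``pieces multiply into the third piece'' rule you already use. On the other hand $ay\in A_3\setminus N$, and then $(ay)b\in A_1\setminus N$. Since $a(yb)=(ay)b$, this element lies in both $A_1\setminus N$ and $A_3\setminus N$, which are disjoint --- contradiction. Hence $\phi_b(A_2\setminus N)\subseteq N$, giving $|A_2\setminus N|\leq|N|$ and hence equality, after which your index-$2$ computation, the normality of $N$, and the identification of $A/N$ with the Klein four-group all go through exactly as you wrote.
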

		\begin{proof}
			Part (a) is an easy exercise. Part (b) is a classical result attributed to 
			Scorza. See, for example,
			\cite{3subgroups} for a proof. 
		\end{proof}
		
		Then we have:
		\begin{lemma}\label{lem:H1 or H2}
		$\Gal(L/K)$ is equal to $H_1$ or $H_2$, but not both.
		\end{lemma}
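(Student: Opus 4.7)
My plan is to split the proof into two parts: ``not both'' follows from a direct integrality argument, while ``$H_1$ or $H_2$'' requires a group-theoretic reduction via Lemma~\ref{lem:3subgroups} followed by an analysis ruling out a biquadratic configuration. For ``not both'', suppose $\Gal(L/K)=H_1=H_2$; then both $q^{\tm}/r^{\tn}$ and $q^{\tm}r^{\tn}$ are fixed by the full Galois group, so both lie in $K$, and hence $q^{2\tm}\in K$. Integrality of $q^{2\tm}$ over $\cO$ and integral closedness of $\cO$ in $K$ force $q^{2\tm}\in\cO$, contradicting the standing hypothesis $\{q^n,r^n:n\in\N\}\cap\cO=\emptyset$.

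For ``$H_1$ or $H_2$'', I argue by contradiction: assume both $H_1,H_2$ are proper. Note $\Gal(L/K^o)$ is also proper, for otherwise $K^o=K$ would force $q^Q\in K\cap\cO$, again contradicting the hypothesis. Hence \eqref{eq:union of 3} writes $\Gal(L/K)$ as a union of three proper subgroups. If any two of them coincide, we obtain $\Gal(L/K)$ as a union of two proper subgroups, contradicting Lemma~\ref{lem:3subgroups}(a); otherwise Lemma~\ref{lem:3subgroups}(b) applies, forcing each subgroup to have index $2$, their common intersection $N$ to be normal, and $\Gal(L/K)/N$ to be the Klein four-group. The three fixed fields $K^o$, $K(q^{\tm}/r^{\tn})$, and $K(q^{\tm}r^{\tn})$ are then pairwise distinct degree-$2$ extensions of $K$ whose composite $M$ has degree $4$, so any two of them intersect in $K$.

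To conclude, I combine $K^o=K(q^Q)\subseteq K(q^{2\tm})\subseteq M$ with $[M:K^o]=2$ to deduce $K(q^{2\tm})\in\{K^o,M\}$, and analogously for $K(r^{2\tn})$. The clean sub-case is $q^{2\tm},r^{2\tn}\in K^o$: then $(q^{\tm}/r^{\tn})^2=q^{2\tm}/r^{2\tn}$ lies in $K^o\cap K(q^{\tm}/r^{\tn})=K$, and likewise $(q^{\tm}r^{\tn})^2\in K$; multiplying yields $q^{4\tm}\in K$, hence $q^{4\tm}\in\cO$ by integral closedness, contradicting the hypothesis. The main obstacle will be the remaining sub-cases where $K(q^{2\tm})=M$ or $K(r^{2\tn})=M$; I plan to dispatch these by exploiting the explicit Type I relation $q^{\tm}=u_\sigma r^{\tn}$ (with $u_\sigma\in\cO[q,\sigma(q),r,\sigma(r)]^{*}$) applied simultaneously to conjugate cosets to force $N_{K(q^{\tm}/r^{\tn})/K}(q^{\tm}/r^{\tn})\in\cO^{*}$, paired with the analogous Type II computation for $q^{\tm}r^{\tn}$, to once more produce a power of $q$ lying in $\cO$.
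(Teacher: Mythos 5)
Your ``not both'' argument and the group-theoretic reduction via Lemma~\ref{lem:3subgroups} match the paper exactly, but the ``$H_1$ or $H_2$'' direction has a genuine gap. The paper avoids your sub-case split entirely by a cleverer choice of exponent: instead of $2$, it raises to the $QR$-th power. Since $q^{QR\tm}=(q^Q)^{R\tm}\in K(q^Q)=K^o$ and $r^{QR\tn}=(r^R)^{Q\tn}\in K(r^R)=K^o$, both $(q^{\tm}/r^{\tn})^{QR}$ and $(q^{\tm}r^{\tn})^{QR}$ automatically lie in $K^o$, and since each also lies in its respective quadratic field (whose intersection with $K^o$ is $K$), both lie in $K$. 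Multiplying gives $q^{2QR\tm}\in K$, hence in $\cO$ by integral closedness, contradiction. No case analysis on whether $K(q^{2\tm})$ equals $K^o$ or $M$ is needed.

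Your plan for the ``main obstacle'' sub-cases is not a proof. You propose showing $N_{K(q^{\tm}/r^{\tn})/K}(q^{\tm}/r^{\tn})\in\cO^*$ and its Type II analogue, but even granting both, that only tells you $q^{\tm}r^{\tn}\cdot\sigma(q^{\tm}r^{\tn})$ and $q^{\tm}r^{-\tn}\cdot\sigma(q^{\tm}r^{-\tn})$ are units in $\cO$; multiplying yields $q^{2\tm}\sigma(q^{2\tm})\in\cO^*$, which is a statement about the norm of $q^{2\tm}$ over some quadratic extension, not a statement that a power of $q$ lies in $\cO$. There is no obvious way to extract the needed contradiction from these unit conditions, and in fact the lemma is a purely field-theoretic consequence of equation~\eqref{eq:union of 3} --- the explicit Type I/II unit relations play no further role once you have the Klein four-group structure. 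Replace your exponent-$2$ argument with the exponent-$QR$ one and the sub-cases disappear.
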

		\begin{proof}
			Assume both $H_1$ and $H_2$ are proper subgroups of
			$\Gal(L/K)$ (note that $\Gal(L/K^o)$ is a proper
		subgroup by the assumption on $q$ and $r$), then Lemma~\ref{lem:3subgroups}
		implies that $\Gal(L/K^o)$, $H_1$, and $H_2$
		are distinct subgroups of index 2 in $\Gal(L/K)$. Hence the fields
		$K^o$, $K\left(\displaystyle\frac{q^{\tm}}{r^{\tn}}\right)$,
		$K(q^{\tm}r^{\tn})$ are distinct fields of degree 2 over $K$. Since
		the elements
		$\left(\displaystyle\frac{q^{\tm}}{r^{\tn}}\right)^{QR}$
		and $(q^{\tm}r^{\tn})^{QR}$ belong to the intersection of $K^o$
		with each of the remaining 2 fields, they belong to $K$.
		Hence $q^{2\tm QR}$ is in $K$, contradiction. Hence $\Gal(L/K)=H_1$
		or $\Gal(L/K)=H_2$.
		
		  The last assertion is easy: suppose $\Gal(L/K)=H_1=H_2$, then both
		  $\displaystyle\left(\frac{q^{\tm}}{r^{\tn}}\right)$ and $q^{\tm}r^{\tn}$
		  belong to $K$. Hence $q^{2\tm}\in K$, contradiction.
		\end{proof}
	
	We can now complete the proof of Theorem~\ref{thm:main 2} by showing that
	$(\tm,\tn)$ must belong to $\cA_{\cO,q,r}\cup\cB_{\cO,q,r}\cup\cC_{\cO,q,r}$. We divide
	into 2 cases:
	
	\textbf{Case 1:} $\Gal(L/K)=H_1$. This gives $\displaystyle\frac{q^{\tm}}{r^{\tn}}\in K$.
	We claim that there must be at least one coset $\sigma\Gal(L/F)$ with
	$\sigma\notin \Gal(L/K^o)$ such that the degeneracy of
	$\bfx_{\tm,\tn,\sigma}$ falls into Type I. Otherwise if
	every degeneracy is of Type II, we have $\Gal(L/K)=\Gal(L/K^o)\cup 
	H_2$ and Lemma~\ref{lem:3subgroups} gives that $\Gal(L/K)=H_2$ contradicting 
	Lemma~\ref{lem:H1 or H2}. Pick such a coset $\sigma\Gal(L/F)$ as claimed,
	then $\displaystyle\frac{q^{\tm}}{r^{\tn}}=u_{\tilde{m},\tilde{n},\sigma}$
	is a unit in $\cO[q,\sigma(q),r,\sigma(r)]$. Hence $\displaystyle\frac{q^{\tm}}{r^{\tn}}\in \cO^*$; in other words
	$(\tm,\tn)\in\cA_{\cO,q,r}$.
	
	\textbf{Case 2:} $\Gal(L/K)=H_2$. This gives $\alpha:=q^{\tm}r^{\tn}\in \cO$.
	By arguing as in Case 1, we can choose a coset $\eta\Gal(L/F)$ such that
	$\eta\notin\Gal(L/K^o)$
	and the degeneracy of $\bfx_{\tm,\tn,\eta}$ falls into Type II. Denoting $d'=[F:K]$,
	we can uniquely write:
	$$q^{\tm}=a_0+a_1r^{\tn}+\ldots+a_{d'-1}r^{\tn(d'-1)}$$
	for $a_0,\ldots,a_{d'-1}\in\cO$. Therefore:
	$$\alpha=q^{\tm}r^{\tn}=a_0r^{\tn}+\ldots+a_{d'-1}r^{\tn d'}.$$
	Since $d'=[K(r^{\tn}):K]$, we must have $a_{d'-1}\neq 0$
  and
  $$X^{d'} + \frac{a_{d'-2}}{a_{d'-1}}X^{d'-1}+\ldots+
  \frac{a_0}{a_{d'-1}}X-\frac{\alpha}{a_{d'-1}}$$
	is the minimal polynomial of $r^{\tn}$ over $K$. In particular:
	\begin{equation}\label{eq:norm 1}
		\frac{\alpha}{a_{d'-1}}=\pm \Norm_{F/K}(r^{\tn}).
	\end{equation}
	where $\Norm_{F/K}$ denotes the norm map associated to the extension $F/K$. This implies:
	\begin{equation}\label{eq:norm 2}
	q^{\tm}=\frac{\alpha}{r^{\tn}}=\frac{\pm\Norm_{F/K}(r^{\tn})a_{d'-1}}{r^{\tn}}.
	\end{equation}
	Our choice of the coset $\eta\Gal(L/F)$ gives:
	\begin{equation}\label{eq:eta}
		q^{\tm}=-u_{\tm,\tn,\eta}\eta(r^{\tn})
	\end{equation}
	Together with \eqref{eq:norm 2}, we have:
	\begin{equation}\label{eq:eta vs norm}
		\pm \Norm_{F/K}(r^{\tn})a_{d'-1}=u_{\tm,\tn,\eta}r^{\tn}\eta(r^{\tn}).
	\end{equation}
	We now have two subcases:
	
	\textbf{Case 2.1:} $d'=[F:K]=2$, then $\eta(r^{\tn})$ is the
	conjugate of $r^{\tn}$ that is different from $r^{\tn}$. Equation \eqref{eq:eta vs norm}
	gives that $a_{d'-1}=\pm u_{\tm,\tn,\eta}$ is a unit
	in the ring $\cO[q,\eta(q),r,\eta(r)]$. Since $a_{d'-1}\in\cO$, we
	have that $a_{d'-1}\in\cO^*$. Finally, equation \eqref{eq:norm 2} gives that
	$q^{\tm}=u\eta(r^{\tn})$ for a unit $u\in \cO^*$. This means $(\tm,\tn)\in\cB_{\cO,q,r}$.
	
	\textbf{Case 2.2:} $d'=[F:K]>2$. Equation \eqref{eq:eta vs norm} implies that 
	$a_{d'-1}\in\cO^*$ and some conjugate of $r^{\tn}$ is a unit over $\cO$
	(see Definition~\ref{def:unit over}), hence $r^{\tn}$ itself is also
	a unit over $\cO$. Finally equation \eqref{eq:norm 2} gives that 
	$q^{\tm}r^{\tn}\in\cO^*$. This means $(\tm,\tn)\in \cC_{\cO,q,r}$.
	
	In conclusion, we have the contradiction that $(\tm,\tn)\in\cA_{\cO,q,r}\cup\cB_{\cO,q,r}\cup\cC_{\cO,q,r}$. Hence the set
	$$W_{k,\ell}\setminus(\cA_{\cO,q,r}\cup\cB_{\cO,q,r}\cup\cC_{\cO,q,r})$$
	must have at most $c_{19}(d)$ elements. This gives at most $QRc_{19}(d)$ pairs 
	$(m,n)\in\N^2$
	outside $\cA_{\cO,q,r}\cup\cB_{\cO,q,r}\cC_{\cO,q,r}$ satisfying
	$\cO[q^m]=\cO[r^n]$. Lemma~\ref{lem:Q and R} finishes the proof
	of Theorem~\ref{thm:main 2}.
	
	\section{An addendum to Theorem~\ref{thm:main 2}}\label{sec:add}
	For the sake of completeness, we explain how to describe
	all pairs $(m,n)$ such that
	$\cO[q^m]=\cO[r^n]$ without the condition that
	$\{q^n,r^n:\ n\in\N\}\cap\cO=\emptyset$. This section, though
	somewhat lengthy due to various cases to be considered, is rather elementary. 
	The notations $d$,
	$L$, $Q$, and $R$ are as in the beginning of Section~\ref{subsec:notation}. 
	Without loss of generality, we assume that $r^R\in\cO$. We consider two cases.
	
	\subsection{The case $q^Q\notin \cO$}
	For every $0\leq \ell\leq R-1$, define:
	$$W_{\ell}:=\{(m,n)\in\N^2:\ \cO[q^m]=\cO[r^n]\ \text{and } n\equiv \ell\bmod R\},$$
	$$V_{\ell}:=\{m\in\N:\ (m,n)\in W_{\ell}\ \text{for some $n$}\}=\pi_1(W_{\ell}),$$
	where $\pi_1$ is the projection from $\N^2$ onto its first factor.
	
	Since $q^m\notin K$ for every $m\in\N$, we have that 
	$W_{0}=\emptyset$. Hence
	it suffices to consider $\ell>0$. We have the following:
	\begin{proposition}\label{prop:first case}
	\begin{itemize}
	\item [(a)] There is a constant $c_{20}(d)$ such that 
	for every $0<\ell\leq R-1$, the set
	$V_{\ell}$ has at most $c_{20}(d)$ elements. 
	\item [(b)] Given $\ell$ and
	$m$
	with $0<\ell\leq R-1$ and $m\in V_{\ell}$. Then either the set
	$$U_{m}:= \{(m,n)\in W_{\ell}\}$$
	is a singleton or $r^R\in \cO^*$. 
	\item [(c)] If $r^R\in \cO^*$ then for every
	$0<\ell\leq R-1$ and every $m\in V_{\ell}$, we have
	$$U_{m}=\{(m,\ell+jR):\ j\in \N\cup\{0\}\}.$$
	\end{itemize}
	\end{proposition}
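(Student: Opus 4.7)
The plan is to handle the three parts in the order \textnormal{(c)}, \textnormal{(b)}, \textnormal{(a)}, since each succeeding part is progressively more involved and the earlier ones reduce the bookkeeping in part \textnormal{(a)}. Part \textnormal{(c)} is immediate from the hypothesis: writing $n = \tilde n R + \ell$, one has $r^n = (r^R)^{\tilde n} r^\ell$ with $(r^R)^{\tilde n}\in\cO^*$, so $\cO[r^n] = \cO[r^\ell]$. Consequently $(m,n)\in W_\ell$ is equivalent to $\cO[q^m]=\cO[r^\ell]$, a condition depending only on $m$, and the claimed description of $U_m$ drops out.

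For part \textnormal{(b)}, suppose for contradiction that $(m,n_1), (m,n_2)\in U_m$ with $n_1<n_2$. Then $\cO[r^{n_1}]=\cO[q^m]=\cO[r^{n_2}]$, which in particular gives $K(r^{n_1})=K(r^{n_2})$; call the common degree $d'$. Proposition~\ref{prop:discriminant} forces $\disc_K(r^{n_2})/\disc_K(r^{n_1})\in\cO^*$. Writing $r^{n_2} = (r^R)^k r^{n_1}$ with $k=(n_2-n_1)/R\geq 1$, the elementary identity $\disc_K(c\alpha)=c^{d'(d'-1)}\disc_K(\alpha)$ for $c\in K$ gives this ratio as $(r^R)^{kd'(d'-1)}$. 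Hence a positive power of $r^R$ lies in $\cO^*$, which forces $r^R\in\cO^*$ (if $(r^R)^N v=1$ for some $v\in\cO$ and $N\geq 1$, then $r^R\cdot (r^R)^{N-1}v=1$).

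For part \textnormal{(a)}, the strategy mimics Proposition~\ref{prop:special case}, with one extra twist to absorb the $n$-dependence coming from the power $(r^R)^{\tilde n}$. Since $q^Q\notin\cO$ and $\cO$ is integrally closed, $q^Q\notin K$, and the inclusion $K(q^Q)\subseteq K(q^n)$ then implies $q^n\notin K$ for every $n\in\N$; hence there exists $\sigma\in\Gal(L/K)$ fixing no $q^n$. For any $(m,n)\in W_\ell$ the equality $K(q^m)=K(r^n)$ shows $\sigma$ cannot fix $r^n$ either. Using $r^R\in K$, so that $\sigma(r^R)=r^R$, one obtains
$$q^m-\sigma(q^m)=u_{m,n}(r^n-\sigma(r^n))=u_{m,n}(r^R)^{\tilde n}\bigl(r^\ell-\sigma(r^\ell)\bigr),$$
where $u_{m,n}$ is a unit of $\cO[q,\sigma(q),r,\sigma(r)]$. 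Let $G$ be the subgroup of $L^*$ generated by $q,\sigma(q),r,\sigma(r)$ together with the units of $\cO[q,\sigma(q),r,\sigma(r)]$; its rank is controlled by $d$. The display above, after dividing by $u_{m,n}(r^R)^{\tilde n}(r^\ell-\sigma(r^\ell))$, is a unit equation $x+y=1$ in $G^2$, and Corollary~\ref{cor:2 variables} bounds its number of solutions, hence the number of distinct values of $q^m/\sigma(q^m)$ as $(m,n)$ ranges over $W_\ell$. A pigeonhole argument identical to the one in Proposition~\ref{prop:special case} finishes: if $|V_\ell|$ exceeds this bound, two indices $m_1<m_2$ share the same ratio, forcing $\sigma$ to fix $q^{m_2-m_1}$, contrary to the choice of $\sigma$.

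The only step requiring real care is the $n$-dependence of the coefficient $(r^R)^{\tilde n}$ in part \textnormal{(a)}: the point is that $r^R$ already lies in the group $G$ (via $r$), so absorbing this scalar does not enlarge the rank beyond what $d$ already controls. Part \textnormal{(b)} rests on the scaling identity for discriminants under multiplication by a base-field scalar, and part \textnormal{(c)} is a direct reinterpretation of the hypothesis. No new diophantine input beyond Corollary~\ref{cor:2 variables} and Proposition~\ref{prop:discriminant} is required.
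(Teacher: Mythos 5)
Your proposal is correct. Parts (a) and (c) follow the paper's own argument: (c) is the same one-line observation, and (a) picks $\sigma\in\Gal(L/K)$ fixing no $q^m$, reads the equality $q^m-\sigma(q^m)=u_{m,n}(r^R)^{\tilde n}(r^\ell-\sigma(r^\ell))$ as a two-variable unit equation in the group $G$ generated by $q,\sigma(q),r,\sigma(r)$ and the units of $\cO[q,\sigma(q),r,\sigma(r)]$, and pigeonholes on the ratio $q^m/\sigma(q^m)$. One small imprecision: after dividing, the resulting tuple is not in $G^2$ because $r^\ell-\sigma(r^\ell)\notin G$; you should leave that factor as the coefficient $a=b=1/(r^\ell-\sigma(r^\ell))$ in the form $ax+by=1$ permitted by Corollary~\ref{cor:2 variables}, exactly as the paper does in Proposition~\ref{prop:same field}. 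Your parenthetical remark that $r^R\in G$ absorbs the $n$-dependence is the right thing to notice.

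Part (b) is where you genuinely depart from the paper. The paper compares the two expressions $q^m-\sigma(q^m)=u_{m,n_1,\sigma}r^{\tilde n_1 R}(r^\ell-\sigma(r^\ell))=u_{m,n_2,\sigma}r^{\tilde n_2 R}(r^\ell-\sigma(r^\ell))$ to get $r^{(\tilde n_2-\tilde n_1)R}$ as a unit in $\cO[q,\sigma(q),r,\sigma(r)]$, then descends to $\cO^*$ using integral closedness. You instead invoke Proposition~\ref{prop:discriminant} directly on $\cO[r^{n_1}]=\cO[r^{n_2}]$ together with the scaling identity $\disc_K(c\alpha)=c^{d'(d'-1)}\disc_K(\alpha)$ for $c=(r^R)^k\in K$, landing immediately in $\cO^*$ and bypassing the descent. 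This is a clean and correct shortcut; it does quietly rely on $d'=[K(r^{n_1}):K]\geq 2$ so that the exponent $kd'(d'-1)$ is positive, but that holds here because $q^Q\notin\cO$ forces $q^m\notin K$, hence $K(r^{n_1})=K(q^m)\neq K$ — worth saying explicitly if you write it up. Both routes trade in the same currency (the equality $\cO[q^m]=\cO[r^{n_1}]=\cO[r^{n_2}]$); yours uses the discriminant relation globally, the paper's reuses the already-fixed embedding $\sigma$.
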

	\begin{proof}
	For part (a), we use the same arguments as in the proof of Proposition~\ref{prop:same field}. Pick
	$\sigma\in\Gal(L/K)$ such that $\sigma$ does not fix $q^m$ for any $m\in\N$. For
	every $(m,n)\in W_{\ell}$, write $n=\tilde{n}R+\ell$. As in the proof
	of Proposition~\ref{prop:same field}, we have:
	$$q^m-\sigma(q^m)=u_{m,n,\sigma}r^{\tilde{n}R}(r^{\ell}-\sigma(r^{\ell}))$$
	which gives at most $c_{20}(d)$ possibilities for
	$\displaystyle\frac{q^m}{\sigma(q^m)}$. Hence there are at most $c_{20}$
	possibilities for $m$ because of the choice of $\sigma$.
	
	For part (b), we note that $U_{m}\neq \emptyset$ since
	$m\in V_{\ell}$. Assume there are $n_1<n_2$ such that
	$(m,n_1),(m,n_2)\in W_{\ell}$. As before, write
	$n_i=\tn_iR+\ell$ for $i=1,2$. From the equation
	$$q^{m}-\sigma(q^m)=u_{m,n_1,\sigma}r^{{\tn_1}R}(r^{\ell}-\sigma(r^{\ell}))=u_{m,n_2,\sigma}r^{\tn_2R}(r^{\ell}-\sigma(r^{\ell})),$$
	we have that $\displaystyle r^{(\tn_2-\tn_1)R}$ and, hence,
	$r^R$ belong to $\cO^*$ by integral closedness of $\cO$.

	Part (c) is immediate since we have $\cO[r^n]=\cO[r^{\ell}]$
	if $n\equiv \ell\bmod R$.
	\end{proof}

	
	Proposition~\ref{prop:first case} finishes the case $q^Q\notin \cO$.
	
	\subsection{The case $q^Q\in\cO$}  As in the proof of Theorem~\ref{thm:main 2},
	we
	fix $0\leq k\leq Q-1$ and $0\leq \ell\leq R-1$ and describe the set:
	$$W_{k,\ell}:=\{(m,n)\in \N^2:\ \cO[q^m]=\cO[r^n],\ m\equiv k\bmod Q\ \text{and } n\equiv \ell\bmod R\}.$$
	It is easy to show that $W_{k,0}=W_{0,\ell}=\emptyset$ if $k\neq 0$ and $\ell\neq 0$. On
	the other hand:
	$$W_{0,0}=Q\N\times R\N.$$
	Hence from now on we may assume $k,\ell>0$. We also assume $K(q^k)=K(r^{\ell})$
	and denote this field by $F$ (otherwise $W_{k,\ell}=\emptyset$). We have
	the tower of fields:
	$$K\subsetneq F\subseteq L.$$
	
	As before, for every $(m,n)\in W_{k,\ell}$ and every $\sigma\in\Gal(L/K)$ with $\sigma\notin\Gal(L/F)$, there is a
	unit $u_{m,n,\sigma}$ (depending only on the coset $\sigma\Gal(L/F)$)
  such that:
	$$q^m-\sigma(q^m)=u_{m,n,\sigma}(r^n-\sigma(r^n)).$$
	Note that $\sigma$ fixes $q^{m-k}$ and $r^{n-\ell}$, we have:
	\begin{equation}\label{eq:m-k n-l}
	\frac{q^{m-k}}{r^{n-\ell}}=u_{m,n,\sigma}\frac{r^{\ell}-\sigma(r^{\ell})}
	{q^k-\sigma(q^k)}
	\end{equation}
	which depends only on $k$, $\ell$, and the coset $\sigma\Gal(L/F)$. If
	there are two distinct pairs $(m_1,n_1)$ and $(m_2,n_2)$ in $W_{k,\ell}$,
	equation \eqref{eq:m-k n-l} gives:
	\begin{equation}\label{eq:m1m2n1n2}
	\frac{q^{m_2-m_1}}{r^{n_2-n_1}}=\frac{u_{m_2,n_2,\sigma}}{u_{m_1,n_1,\sigma}}\in\cO^*
	\end{equation}
	since it is a unit over $\cO$ (see Definition~\ref{def:unit over}) and belongs to $K$ due to $Q\mid m_1-m_2$ and
	$R\mid n_2-n_1$.
	
	Note that if $r$ is a unit over $\cO$ then $r^R\in\cO^*$, hence $\cO[r^n]=\cO[r^{\ell}]$
	for every $n\equiv \ell\bmod R$. Moreover, if $q$ is not
	a unit over $\cO$ and $(m_1,n_1),(m_2,n_2)\in W_{k,\ell}$
	then \eqref{eq:m1m2n1n2} gives that $m_1=m_2$.
	Hence we have the following:
	\begin{proposition}\label{prop:non-unit}
		\begin{itemize}
		\item [(a)] If both $q$ and $r$ are units over $\cO$ then
		$W_{k,\ell}$ is either empty or has the form
		$$(k,\ell)+Q\N\times R\N.$$
		\item [(b)] If $r$ is a unit over $\cO$ and $q$ is not, then either $W_{k,\ell}$
		is empty
		or has the form
		$$\{(m_1,n):\ n\equiv \ell\bmod R\}$$
		where $m_1$ is the only positive integer such that $\cO[q^{m_1}]=\cO[r^{\ell}].$ A completely analogous statement holds
		when $q$ is a unit over $\cO$ and $r$ is not.
		\item [(c)] Assume that neither $q$ nor $r$ is a unit over $\cO$. If $|W_{k,\ell}|\geq 2$
		then the following holds. There is a minimal pair $(M,N)\in\N^2$
		satisfying $\displaystyle\frac{q^{QM}}{r^{RN}}\in\cO^*$. For any 2 distinct
		pairs
		$(m_1,n_1),(m_2,n_2)\in W_{k,\ell}$, we have $(m_2-m_1)(n_2-n_1)>0$.
		Moreover, we have $\displaystyle\frac{m_2-m_1}{QM}=\frac{n_2-n_1}{RN}$ and it is an integer.
		\end{itemize}
		\begin{proof}
		Perhaps only part (c) needs further explanation. 
		The assertion $(m_2-m_1)(n_2-n_1)>0$ follows from \eqref{eq:m1m2n1n2}
		and the assumption that $q$ and $r$ are not unit over $\cO$.
		The set of $(M,N)\in\Z^2$
		such that $\displaystyle{q^{QM}}{r^{RN}}\in \cO^*$  is a $\Z$-module
		of rank at most 1 since neither $q$ nor $r$ is a unit over $\cO$.
		
		In fact, this $\Z$-module has rank 1 and a basis $(M,N)\in\N^2$ due to \eqref{eq:m1m2n1n2}. As a consequence, for any
		distinct $(m_1,n_1),(m_2,n_2)\in W_{k,\ell}$
		the pair $\displaystyle\left(\frac{m_2-m_1}{Q},\frac{n_2-n_1}{R}\right)$
		is an integral multiple of the basis $(M,N)$. This implies
		the last assertion of (c).
		\end{proof}
	\end{proposition}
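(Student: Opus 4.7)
My approach hinges on equation \eqref{eq:m1m2n1n2}: for any two distinct pairs $(m_1,n_1),(m_2,n_2)\in W_{k,\ell}$, setting $a=(m_2-m_1)/Q$ and $b=(n_2-n_1)/R$, one obtains $(q^Q)^a(r^R)^{-b}\in\cO^*$. I will combine this with two elementary facts: (i) in a commutative ring, a product of elements that is a unit has each factor a unit; and (ii) if $\alpha$ is a unit over $\cO$ and $\alpha^N\in\cO$, then $\alpha^N\in\cO^*$---indeed, the inverse of $\alpha^N$ lies in $\cO[\alpha]$, is therefore integral over $\cO$, and so lies in $\cO$ by integral closedness.

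For part (a), fact (ii) applied to $q$ and $r$ yields $q^Q,r^R\in\cO^*$, whence $\cO[q^{k+aQ}]=\cO[q^k]$ and $\cO[r^{\ell+bR}]=\cO[r^\ell]$ for all $a,b\geq 0$. Hence $W_{k,\ell}$ is either empty or the full set of pairs obeying the prescribed congruences. For part (b), fact (ii) again gives $r^R\in\cO^*$, so $\cO[r^n]$ depends only on $n\bmod R$, and $W_{k,\ell}$ is either empty or a union of vertical lines $\{(m_1,n):n\equiv\ell\bmod R\}$. Uniqueness of $m_1$ is forced by \eqref{eq:m1m2n1n2}: a second value $m_2\neq m_1$ with $\cO[q^{m_2}]=\cO[r^\ell]$ would produce $(q^Q)^a\in\cO^*$ for some nonzero $a$, contradicting, via fact (ii), the hypothesis that $q$ is not a unit over $\cO$.

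Part (c) is the main substance. I introduce
\[S:=\{(a,b)\in\Z^2:\ (q^Q)^a(r^R)^{-b}\in\cO^*\},\]
a subgroup of $\Z^2$, and analyze its sign structure. If $(a,0)\in S$ with $a\neq 0$, then $(q^Q)^{|a|}\in\cO^*$ would make $q$ a unit over $\cO$; symmetrically for $(0,b)$ with $b\neq 0$. If $(a,b)\in S$ with $a>0,\ b<0$, then $(q^Q)^a(r^R)^{|b|}\in\cO^*$ and fact (i) would force $q^Q,r^R\in\cO^*$, again contradicting the hypotheses. Hence every nonzero element of $S$ has coordinates of the same nonzero sign. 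The assumption $|W_{k,\ell}|\geq 2$ together with \eqref{eq:m1m2n1n2} supplies a nonzero element, so $S$ is a rank-one subgroup generated by a unique primitive vector $(M,N)\in\N^2$, which is the asserted minimal pair. For any two distinct $(m_1,n_1),(m_2,n_2)\in W_{k,\ell}$, the element $(a,b)=((m_2-m_1)/Q,(n_2-n_1)/R)$ is a nonzero integer multiple $c(M,N)$ with $c\in\Z$; the sign analysis forces $c$ and $(M,N)$ to align, and after renaming to make $c>0$ one obtains $(m_2-m_1)(n_2-n_1)=c^2QRMN>0$ together with $(m_2-m_1)/(QM)=(n_2-n_1)/(RN)=c\in\N$.

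The main technical point is the sign analysis confining nonzero elements of $S$ to a single open half of $\Z^2$; once this is in hand, (c) is a routine consequence of the structure theorem for subgroups of $\Z^2$. I do not anticipate any further obstacle.
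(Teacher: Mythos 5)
Your proposal is correct and follows essentially the same route as the paper: both introduce the $\Z$-module of exponent pairs $(a,b)$ with $q^{Qa}/r^{Rb}\in\cO^*$, derive rank at most one from the non-unit hypotheses, and deduce the structure of $W_{k,\ell}$ from \eqref{eq:m1m2n1n2}. You merely make explicit the sign analysis and the two elementary facts about units that the paper leaves implicit.
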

	  
	\begin{assume}\label{assume:valuations}
	The following assumption is \emph{only needed when one is concerned with the effectiveness
	of the results in the rest of this section}. Since $\cO$ is a Noetherian 
	integrally closed domain,
	if $q$ is not a unit (respectively $r$ is not a unit), there are only
	finitely many minimal primes
	ideal $\fq$ (respectively $\fr$) containing $q$ (respectively $r$), 
	each of the $\fq$ (respectively $\fr$) has height 1, and the localization
	$\cO_{\fq}$ (respectively $\cO_{\fr}$)
	is a DVR \cite{Matsumura}.
	For each such $\fq$ (respectively $\fr$), let
	$v_{\fq}$ denote the corresponding valuation on $K$ 
	normalized so that $v_{\fq}(K^*)=\Z$
	(respectively $v_{\fr}(K^*)=\Z$).
	\emph{We make the following assumption: it is possible 
	to
	effectively determine all the minimal primes $\fq$ (respectively $\fr$)
	containing $q$ (respectively $r$) and 
	to compute an extension on $L$ for each
	of the valuations $v_{\fq}$ (respectively $v_{\fr}$).} 
	\end{assume}
	
	\begin{remark}\label{rem:effective in bc}
		Under Assumption~\ref{assume:valuations},
		by choosing \emph{one} minimal prime $\fq$ containing $q$ and using
		the valuation $v_{\fq}$, 
		we can effectively determine the number $m_1$ in 
		\eqref{eq:m-k n-l}. The pair $(M,N)$ in part (c) can 
		be determined effectively by using \emph{all}
		the valuations $v_{\fq}$ and $v_{\fr}$. If such pair $(M,N)$ does not exist,
		then either $W_{k,\ell}=\emptyset$
		or $W_{k,\ell}$ contains at most one element;
		in both cases the set $W_{k,\ell}$ can be computed thanks to \eqref{eq:m-k n-l}.
	\end{remark}
	  
	\emph{From now on, we assume that neither $q$ nor $r$ is a unit, there
	is a minimal pair $(M,N)\in\N^2$ satisfying 
	$\displaystyle\frac{q^{QM}}{r^{RN}}\in\cO^*$, and $W_{k,\ell}\neq \emptyset$}. Part (c) of Proposition~\ref{prop:non-unit}
shows that $W_{k,\ell}$ has the minimal element denoted 
$(\tm,\tn)$ such that every $(m,n)$ in $W_{k,\ell}$
has the form $(\tm+tQM,\tn+tRN)$
for some $t\in \N\cup\{0\}$. We finish this section by solving the following two
problems:
 \begin{itemize}
 	\item [I:] explain how to obtain an upper bound  for $\tm$ and $\tn$. Once
 	this is done, by verifying the equation $\cO[q^m]=\cO[r^n]$
 	for $m$ and $n$ within such a bound, we could decide whether $W_{k,\ell}$
 	is empty or not.
 	
 	\item [II:] given $(\tm,\tn)$, explain how to find all $t$ such that
 	$(\tm+tQM,\tn+tRN)\in W_{k,\ell}$. This completely describes $W_{k,\ell}$.
 \end{itemize} 	 
	For the rest of this section, $c_{21},c_{22},\ldots$ denote 
	positive constants
	depending on $K$, $q$, and $r$. These constants can be computed
	under Assumption~\ref{assume:valuations}. We have:
	\begin{lemma}\label{lem:C16}
	There is a positive constant $c_{21}\geq 1$ 
	such that:
	$\tn \leq c_{21}\tm$ and $\tm\leq c_{21}\tn$.
	\end{lemma}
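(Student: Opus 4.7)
The plan is to apply a carefully chosen valuation of $L$ to the identity \eqref{eq:m-k n-l} and combine it with the relation $q^{QM}/r^{RN}\in\cO^*$ to force $\tm\cdot RN$ and $\tn\cdot QM$ to differ by a bounded amount.

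First I construct a valuation $w$ of $L$ with $w(q)>0$ and $w\geq 0$ on the integral closure $\cO_L$ of $\cO$ in $L$. Since $q$ is not a unit over $\cO$, the element $\Norm_{K(q)/K}(q)$ lies in $\cO\setminus\cO^*$ and hence in some minimal prime $\fq$ of $\cO$; let $v_{\fq}$ be the associated valuation of $K$ and extend it to $L$, obtaining a valuation $w_0$ that is nonnegative on $\cO_L$. Because $\Norm_{K(q)/K}(q)$ equals a product of conjugates of $q$ (with appropriate multiplicities), some $\sigma_0\in\Gal(L/K)$ satisfies $w_0(\sigma_0(q))>0$; setting $w:=w_0\circ\sigma_0$ gives the required valuation. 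The relation $q^{QM}/r^{RN}\in\cO^*$ and $w\geq 0$ on $\cO$ force $w(q^{QM}/r^{RN})=0$, i.e.\ $QM\cdot w(q)=RN\cdot w(r)$, so in particular $w(r)>0$.

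Since $F=K(q^k)\supsetneq K$ (because $0<k<Q$ and $Q$ is by definition the smallest positive integer with $q^Q\in K$), I pick $\sigma\in\Gal(L/K)\setminus\Gal(L/F)$. All generators of $\cO[q,\sigma(q),r,\sigma(r)]$ are integral over $\cO$, so $w\geq 0$ on this ring and hence $w(u_{\tm,\tn,\sigma})=0$. Applying $w$ to \eqref{eq:m-k n-l} gives
\[
(\tm-k)w(q)-(\tn-\ell)w(r)=w\bigl(r^\ell-\sigma(r^\ell)\bigr)-w\bigl(q^k-\sigma(q^k)\bigr)=:C,
\]
with $C$ independent of $\tm,\tn$. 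Substituting $w(r)=\tfrac{QM}{RN}w(q)$ and clearing denominators yields $(\tm-k)RN-(\tn-\ell)QM=\tfrac{C\cdot RN}{w(q)}$, a constant depending only on $K,q,r,k,\ell,\sigma$. Since $k,\ell,\sigma$ range over finite sets controlled by $K,q,r$, this produces a bound $|\tm\cdot RN-\tn\cdot QM|\leq C'$ with $C'=C'(K,q,r)$.

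Because $\tm,\tn\geq 1$, the inequality $|\tm\cdot RN-\tn\cdot QM|\leq C'$ immediately gives $\tn\leq\tfrac{RN}{QM}\tm+\tfrac{C'}{QM}\leq c_{21}\tm$ and, symmetrically, $\tm\leq c_{21}\tn$, for $c_{21}=c_{21}(K,q,r)\geq 1$ taken large enough. The only delicate step is the construction of the valuation $w$ with $w(q)>0$, which relies on $q$ not being a unit over $\cO$ together with the extension of valuations from $K$ to $L$; everything else is a direct manipulation of \eqref{eq:m-k n-l} against the constraint imposed by the pair $(M,N)$.
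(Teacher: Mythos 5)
Your proof is correct and follows the same valuation-theoretic route the paper indicates: apply a suitable valuation to identity \eqref{eq:m-k n-l} to extract a linear constraint on $(\tm,\tn)$. The only variation is that the paper picks two valuations $v_{\fq}$, $v_{\fr}$ (one with $v_{\fq}(q)>0$, one with $v_{\fr}(r)>0$) and reads off each inequality separately, whereas you use a single valuation $w$ with $w(q)>0$ and leverage $q^{QM}/r^{RN}\in\cO^*$ to deduce $w(r)>0$, obtaining an exact linear relation $\tm\cdot RN-\tn\cdot QM=C''$ from which both inequalities follow; this is a small but valid economy.
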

	\begin{proof}
	By picking one valuation $v_{\fq}$ and one valuation $v_{\fr}$, we can
	use \eqref{eq:m-k n-l} to prove this lemma easily.
	\end{proof}

	Let $d':=[F:K]\geq 2$,
	we can uniquely write:
	\begin{align}\label{eq:a b}
		\begin{split}
	q^{\tilde{m}}&=a_0+a_1r^{\tn}+a_2 r^{2\tn}+\ldots+a_{d'-1}r^{(d'-1)\tn}\\
	r^{\tilde{n}}&=b_0+b_1q^{\tm}+b_2 q^{2\tm}+\ldots+b_{d'-1}q^{(d'-1)\tm}
		\end{split}
	\end{align}
	for $a_0,\ldots,a_{d'-1},b_0,\ldots,b_{d'-1}\in\cO$. Denote $u=\displaystyle\frac{q^{QM}}{r^{RN}}\in\cO^*$.
	For every $t\in\Z$, using $q^{tQM}=u^tr^{tRN}$ and $r^{tRN}=u^{-t}q^{tQM}$, we have the following:
	\begin{align}\label{eq:alpha beta}
	\begin{split}
		q^{\tm+tQM}&=\alpha_0+\alpha_1 r^{\tn+tRN}+\alpha_2 r^{2(\tn+tRN)}+\ldots
		+\alpha_{d'-1}r^{(d'-1)(\tn+tRN)}\\
		r^{\tn+tRN}&=\beta_0+\beta_1 q^{\tn+tQM}+\beta_2 q^{2(\tm+tQM)}+\ldots
		+\beta_{d'-1}q^{(d'-1)(\tm+tQM)}
	\end{split}
	\end{align}
	where $\displaystyle\alpha_i=\frac{a_iu^t}{r^{(i-1)tRN}}\in K$
	and  $\displaystyle\beta_i=\frac{b_iu^{-t}}{q^{(i-1)tQM}}\in K$ for
	$0\leq i\leq d'-1$. Now we can give an upper bound for $\tm$ and $\tn$:
	\begin{proposition}\label{prop:C17}
	Define $c_{22}=c_{21}\max\{QM,RN\}$. We have $\max\{\tm,\tn\}\leq c_{22}$.
	\end{proposition}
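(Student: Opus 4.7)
The plan is to argue by contradiction: suppose $\max\{\tm,\tn\} > c_{22}$, and show that $(\tm-QM,\tn-RN)$ itself lies in $W_{k,\ell}$, contradicting the minimality of $(\tm,\tn)$. Combined with Lemma~\ref{lem:C16} and $c_{21}\geq 1$, the hypothesis $\max\{\tm,\tn\}>c_{22}=c_{21}\max\{QM,RN\}$ forces both $\tm\geq QM$ and $\tn\geq RN$: indeed if, say, $\tm<QM$, then $\tn\leq c_{21}\tm<c_{21}QM\leq c_{22}$ and $\tm<QM\leq c_{22}$, contradicting the hypothesis. The congruences $\tm\equiv k\bmod Q$ and $\tn\equiv\ell\bmod R$ with $k,\ell>0$ force both inequalities to be strict, so $(\tm-QM,\tn-RN)\in\N^2$ with the correct residues modulo $Q$ and $R$.

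The main task is then to verify $\cO[q^{\tm-QM}]=\cO[r^{\tn-RN}]$. Specializing \eqref{eq:alpha beta} to $t=-1$ reduces this to showing $\alpha_i,\beta_i\in\cO$ for $0\leq i\leq d'-1$. For $i\geq 1$ this is automatic since $r^{RN},q^{QM}\in\cO$ and $u\in\cO^*$. The remaining conditions $\alpha_0=a_0u^{-1}r^{-RN}\in\cO$ and $\beta_0=b_0uq^{-QM}\in\cO$ both reduce, via $q^{QM}=ur^{RN}$, to the single requirement $a_0,b_0\in r^{RN}\cO$.

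To verify this via the characterization $\cO=\bigcap_{\fp}\cO_{\fp}$ over minimal primes $\fp$, observe first that for $\fp$ not containing $r$ we have $v_{\fp}(r)=0$, and $v_{\fp}(a_0),v_{\fp}(b_0)\geq 0$ then suffice. For any minimal prime $\fr\ni r$, applying $v_{\fr}$ to $q^{QM}=ur^{RN}$ yields $v_{\fr}(q)=(RN/QM)v_{\fr}(r)>0$, and the ultrametric inequality applied to \eqref{eq:a b} gives
$$v_{\fr}(a_0)\geq\min\bigl\{v_{\fr}(q^{\tm}),\,v_{\fr}(a_ir^{i\tn}):1\leq i\leq d'-1\bigr\}\geq v_{\fr}(r)\min\{\tm RN/QM,\tn\}\geq RN\cdot v_{\fr}(r),$$
where the last inequality uses $\tm\geq QM$ and $\tn\geq RN$. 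The identical argument for the expansion of $r^{\tn}$ gives $v_{\fr}(b_0)\geq RN\cdot v_{\fr}(r)$, as required.

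The step I expect to be the main obstacle is this valuation bookkeeping; in particular, it depends crucially on the identity $v_{\fr}(q)/v_{\fr}(r)=RN/QM$ at every height-one prime containing $r$, which rests on both the minimality of $(M,N)$ and the standing assumption that neither $q$ nor $r$ is a unit over $\cO$.
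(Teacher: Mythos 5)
Your argument is correct and follows essentially the same line as the paper's proof: assume $\max\{\tm,\tn\}>c_{22}$, deduce $\tm>QM$ and $\tn>RN$ from Lemma~\ref{lem:C16} (together with $c_{21}\geq 1$), and contradict minimality of $(\tm,\tn)$ by showing $\cO[q^{\tm-QM}]=\cO[r^{\tn-RN}]$, which via the $t=-1$ specialization of \eqref{eq:alpha beta} reduces exactly to the containments $a_0\in r^{RN}\cO$ and $b_0\in q^{QM}\cO$. The paper states this last step without justification; your height-one-prime valuation bookkeeping is a valid way to verify it, though note that the identity $v_{\fr}(q)/v_{\fr}(r)=RN/QM$ only uses the relation $q^{QM}=ur^{RN}$ with $u\in\cO^*$, not the minimality of $(M,N)$. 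A slicker verification, avoiding Assumption~\ref{assume:valuations} entirely, is to rewrite \eqref{eq:a b} as
$$\frac{a_0}{r^{RN}}=u\,q^{\tm-QM}-\sum_{i=1}^{d'-1}a_i\,r^{i\tn-RN},$$
observe that the right side is integral over $\cO$ (each term is, since $\tm>QM$ and $i\tn-RN\geq\tn-RN>0$) while the left side lies in $K$, and conclude $a_0/r^{RN}\in\cO$ by integral closedness of $\cO$; the same works for $b_0/q^{QM}$.
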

	\begin{proof}
		Assume otherwise: $\max\{\tm,\tn\}> c_{22}$. Then Lemma~\ref{lem:C16}
		gives that $\tm> QM$ \emph{and} $\tn> RN$. By
		\eqref{eq:a b} and the fact that $\displaystyle\frac{q^{QM}}{r^{RN}}\in\cO^*$,
		we have that $a_0\in r^{RN}\cO$ and $b_0\in q^{QM}\cO$. 
		By \eqref{eq:alpha beta} when $t=-1$, we have that
		$\alpha_i,\beta_i\in\cO$ for $0\leq i\leq d'-1$; it suffices to check
		this when $i=0$ only. In other words, we have
		$\cO[q^{\tm-QM}]=\cO[r^{\tn-RN}]$
		violating the minimality of $(\tm,\tn)$.		
	\end{proof}
	
	Since we have bounded $(\tm,\tn)$ in terms of $K$, $q$, and $r$, the bounds given below,
	which apparently depend on $(\tm,\tn)$, indeed depend only on $K$, $q$, and $r$. 
	It is obvious that the two conditions ``$a_i\neq 0$ for some $2\leq i\leq d'-1$''
	and ``$b_j\neq 0$ for some $2\leq j\leq d'-1$'' are equivalent since $q^{\tm}$
	is linear in $r^{\tn}$ iff $r^{\tn}$ is linear in $q^{\tm}$. When $a_i=b_j=0$
	for every $2\leq i,j\leq d'-1$, the equation $\cO[q^{\tm}]=\cO[r^{\tn}]$ 
	is equivalent to $a_1$ or $b_1$, hence both, are units.
	The following result concludes this section:
	\begin{proposition}
	If $d'\geq 3$ and $a_i\neq 0$ for some $2\leq i\leq d'-1$ (hence $b_j\neq 0$ for some $2\leq j\leq d'-1$), there exists a positive constant $c_{23}$ such that 
	every $(m,n)\in W_{k,\ell}$ satisfies $\displaystyle \frac{m-\tm}{QM}=\frac{n-\tn}{RN}\leq c_{23}$. Consequently, $W_{k,\ell}$ has
	at most $c_{23}+1$ elements.
	
	On the other hand, if $a_i=b_i=0$ for every $2\leq i\leq d'-1$ (this is
	vacuously true when $d'=2$), then:
	$$W_{k,\ell}=\{(\tm+tQM,\tn+tRN):\ t\in\N\}.$$
	\end{proposition}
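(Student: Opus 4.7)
The plan is to characterize membership in $W_{k,\ell}$ via the coefficients $\alpha_i,\beta_j$ from \eqref{eq:alpha beta}, then bound $t$ using a suitably chosen valuation. Since $\cO$ is integrally closed and $[F:K]=d'$, the ring $\cO[r^{\tn+tRN}]$ is a free $\cO$-module with basis $\{r^{i(\tn+tRN)}:0\le i\le d'-1\}$; by uniqueness of the $K$-linear expansion we obtain the equivalence
$$(\tm+tQM,\tn+tRN)\in W_{k,\ell}\ \Longleftrightarrow\ \alpha_i\in\cO\ \text{for all }i\ \Longleftrightarrow\ \beta_j\in\cO\ \text{for all }j.$$
The conditions are automatic for $i\in\{0,1\}$: $\alpha_0=a_0u^tr^{tRN}\in\cO$ and $\alpha_1=a_1u^t\in\cO$ since $u\in\cO^*$ and $r^R,a_0,a_1\in\cO$ (and symmetrically for $\beta_0,\beta_1$). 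So everything reduces to the coefficients indexed by $i,j\ge 2$.

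For the first assertion, fix $i_0\in\{2,\ldots,d'-1\}$ with $a_{i_0}\ne 0$. Using Assumption~\ref{assume:valuations}, pick a minimal prime $\fr$ of $\cO$ containing $r^R\in\cO$; then $v_\fr(r^{RN})=Nv_\fr(r^R)\ge N\ge 1$, while $v_\fr(u)=0$ as $u\in\cO^*$. From $\alpha_{i_0}=a_{i_0}u^t/r^{(i_0-1)tRN}\in\cO$ we deduce
$$v_\fr(a_{i_0})\ge (i_0-1)\,t\,v_\fr(r^{RN}),$$
which bounds $t\le v_\fr(a_{i_0})/\bigl((i_0-1)v_\fr(r^{RN})\bigr)=:c_{23}$, a finite positive constant. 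Combined with Proposition~\ref{prop:non-unit}(c), this gives $(m-\tm)/(QM)=(n-\tn)/(RN)=t\le c_{23}$, so $|W_{k,\ell}|\le c_{23}+1$.

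For the second assertion, we have $q^{\tm}=a_0+a_1r^{\tn}$ and $r^{\tn}=b_0+b_1q^{\tm}$. Substituting one into the other and using that $\{1,q^{\tm},\ldots,q^{(d'-1)\tm}\}$ is a basis forces $a_1b_1=1$ (and $a_0+a_1b_0=0$), so $a_1,b_1\in\cO^*$. Then for every $t\in\N\cup\{0\}$ the nontrivial coefficients $\alpha_0=a_0u^tr^{tRN}\in\cO$, $\alpha_1=a_1u^t\in\cO^*$, $\alpha_i=0$ for $i\ge 2$ are all in $\cO$, and analogously for the $\beta_j$. Hence every such pair $(\tm+tQM,\tn+tRN)$ lies in $W_{k,\ell}$; combined with Proposition~\ref{prop:non-unit}(c) this yields the claimed equality (with $t$ ranging over $\N\cup\{0\}$, so that $(\tm,\tn)$ itself is included).

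I do not anticipate a serious obstacle: once the expansions \eqref{eq:alpha beta} are in hand, the argument is a short valuation computation in the first case and a direct substitution in the second. The only mild technicality is arranging that $v_\fr(r^{RN})>0$, which is immediate from the choice of $\fr$.
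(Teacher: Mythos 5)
Your proof is correct and follows essentially the same route as the paper: expand $q^{\tm+tQM}$ and $r^{\tn+tRN}$ via \eqref{eq:alpha beta}, apply a valuation $v_{\fr}$ with $v_{\fr}(r^{RN})>0$ to force a bound on $t$ in the first case, and verify directly that all $\alpha_i,\beta_j\in\cO$ in the second; the explicit value of $c_{23}$ you extract is a nice refinement of the paper's terser ``for sufficiently large $t$.'' One small caution: the chain of equivalences you assert, in particular ``$\alpha_i\in\cO$ for all $i$'' $\Longleftrightarrow$ ``$\beta_j\in\cO$ for all $j$'', is stronger than what ``uniqueness of the $K$-linear expansion'' justifies — each condition on its own only characterizes a one-sided containment of rings, and the two-sided statement would need a separate argument (for instance a discriminant comparison). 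Since your bound in the first assertion only uses the trivial direction $(m,n)\in W_{k,\ell}\Rightarrow\alpha_{i_0}\in\cO$, and in the second assertion you verify both families of coefficients, nothing actually breaks, but you should either weaken the stated equivalence or supply the missing justification.
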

	\begin{proof}
	To prove the first assertion, pick $2\leq i\leq d'-1$ such that $a_i\neq 0$.   
	By using a valuation $v_{\fr}$,
	we have that 
	for sufficiently large $t\in\N$, the
	coefficient:
	$$\alpha_i=\frac{a_iu^t}{r^{(i-1)tRN}}$$
	cannot belong to $\cO$, hence $q^{\tm+tQM}\notin\cO[r^{\tn+tRN}]$.
	
	For the second assertion, for every $t\in\N$, we have $\alpha_0,\alpha_1,\beta_0,\beta_1\in\cO$
	while $\alpha_i=\beta_i=0$ for $2\leq i\leq d'-1$. This gives
	 $\cO[q^{\tm+tQM]}=\cO[r^{\tn+tRN}]$.
	\end{proof}
	  
	\section{Final remarks and further questions}\label{sec:last}
	\subsection{Effectiveness of our results}
	The effectiveness of Theorem~\ref{thm:EG} and results in 
	Section~\ref{sec:add} has been discussed.  
	For Theorem~\ref{thm:main 2}
	we note that it is \emph{not} effective in the sense that 
	we cannot provide a bound for the pairs $(m,n)\notin\cA_{\cO,q,r}\cup\cB_{\cO,q,r}\cup\cC_{\cO,q,r}$ satisfying
	$\cO[q^m]=\cO[r^n]$. The reason is that the theorem of Evertse, Schlickewei and Schmidt is not effective. Its proof relies
	crucially on a quantitative absolute version of the Subspace
	Theorem by Evertse and Schlickewei \cite{ESquantitative} after
	seminal work by Schmidt \cite{Schmidt72}, \cite{Schmidt89}. The question
	of making the Subspace Theorem effective is still wide open. We ask
	the following:
	\begin{question}
	Let $\cO$, $q$, $r$, $\cA_{\cO,q,r}$, $\cB_{\cO,q,r}$, $\cC_{\cO,q,r}$ be as 
	in 
	Theorem~\ref{thm:main 2}. Provide a bound (depending on $\cO$, $q$, and $r$) 
	for all 
	pairs $(m,n)\in\N^2$
	such that $\cO[q^m]=\cO[r^n]$
	and $(m,n)\notin \cA_{\cO,q,r}\cup\cB_{\cO,q,r}\cup\cC_{\cO,q,r}$.
	\end{question}
	
	\subsection{Another result by Bell and Hare}
	Besides the results mentioned previously in this paper, Bell and Hare \cite[Theorem~1.5]{BellHare} prove the following:
	\begin{theorem}[Bell-Hare]\label{thm:40}
	Let $r$ be an algebraic integer of degree at most 3. Then there are at most
	40 Pisot numbers $q$ such that $\Z[q]=\Z[r]$.
	\end{theorem}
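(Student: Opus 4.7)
The plan is to combine Theorem~\ref{thm:EG} applied to $\cO=\Z$ and $K=\Q$ with the Pisot-counting argument already worked out in Corollary~\ref{cor:2 questions}. Applying Theorem~\ref{thm:EG} to $r$ produces a list $s_1,\ldots,s_N$ with $\Z[s_i]=\Z[r]$ such that every $q$ satisfying $\Z[q]=\Z[r]$ has $q=us_i+k$ for some $u\in\{\pm 1\}$, $1\le i\le N$, and $k\in\Z$. If $q$ is Pisot, then for any nontrivial embedding $\sigma$ of $\Q(r)$ into $\C$ we have $|\sigma(q)|=|u\sigma(s_i)+k|<1$, which forces $k$ to lie in an interval of length $2$ and so leaves at most $2$ integer values of $k$ for each $(i,u)$. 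Consequently the number of Pisot $q$ is at most $4N$, and the entire task reduces to proving $N\le 10$ when $\deg r\le 3$.

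For $\deg r\le 2$, Case~1 of the proof of Theorem~\ref{thm:EG} gives $N=1$ directly: one may take $\{s_1\}=\{r\}$, so at most $4$ Pisot numbers arise, well below $40$. For $\deg r=3$, I would follow Case~2 of the proof of Theorem~\ref{thm:EG}. Let $L$ be the Galois closure of $\Q(r)/\Q$, of degree $3$ (cyclic case) or $6$ ($S_3$ case). A direct signature calculation together with Dirichlet's unit theorem shows that the group $G_r$ generated by the units of the rings $\Z[\sigma(r),\tau(r)]$ and the differences $\sigma(r)-\tau(r)$ has rank at most $2$. Siegel's identity then converts the problem into a unit equation $x+y=1$ with $(x,y)\in G_r^2$, which is controlled by Corollary~\ref{cor:2 variables}; the $\bP(L)$-coincidence analysis at the end of the proof of Theorem~\ref{thm:EG} finally extracts the list $\{s_1,\ldots,s_N\}$.

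The main obstacle is the sharpness of the numerical constant $40$. The raw estimate from Corollary~\ref{cor:2 variables} with $R=2$ is roughly $2^{48}$, vastly too weak to give $N\le 10$. To pin down the exact figure I would supplement the unit-equation count with three concrete degree-$3$ inputs: (i) the Pisot archimedean inequalities, which disqualify most candidate $q$ coming from unit-equation solutions; (ii) the discriminant identity of Proposition~\ref{prop:discriminant}, which determines $\disc_{\Q}(q)$ up to sign and so narrows the minimal polynomials that can occur; and (iii) a direct enumeration of integer cubic polynomials whose roots generate $\Z[r]$ and satisfy the Pisot inequalities on the non-dominant conjugates. The expected picture is that the Evertse--Gy\H{o}ry framework provides the conceptual bound, while the constant $40$ emerges from this explicit degree-$3$ classification rather than from the general unit-equation machinery, and reproducing Bell and Hare's argument will amount to making that classification rigorous and counting its output carefully.
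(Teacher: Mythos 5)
The paper does not prove Theorem~\ref{thm:40}---it is quoted from Bell and Hare---but it does state explicitly what the proof ingredient is, and it also states explicitly that the route you propose fails. In the discussion immediately following the theorem, the paper says: ``Unfortunately, the bound in Corollary~\ref{cor:2 questions} when $\cO=\Z$ and $d=3$ is \emph{much larger} than $40$. The proof of Theorem~\ref{thm:40} uses results on cubic Thue equations $F(x,y)=1$ by Bennett.'' Your proposal starts from exactly the route the paper disavows (Theorem~\ref{thm:EG} plus the Pisot-counting step of Corollary~\ref{cor:2 questions}) and then hopes to shrink a constant on the order of $2^{48}$ down to a single digit by adding Pisot inequalities, the discriminant identity, and ``a direct enumeration.'' That cannot succeed: the Beukers--Schlickewei bound is intrinsically exponential in the rank, and the supplementary observations you list only discard a bounded proportion of candidates; they do not change the order of magnitude of the unit-equation count, so the framework cannot produce a constant like $40$.

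The idea you are missing is to abandon the unit-equation framework for the degree-$3$ case and instead use the \emph{index form equation}. Write $q=a_0+a_1r+a_2r^2$ with $a_i\in\Z$; then $\Z[q]=\Z[r]$ is equivalent to $I(a_1,a_2)=\pm1$, where $I$ is the binary cubic index form attached to $r$. This is a cubic Thue equation, and Bennett's Theorem~1.4 in \cite{Bennett2001} gives a small explicit upper bound on the number of integer solutions $(a_1,a_2)$. The Pisot condition on the non-dominant conjugates then pins $a_0$ to a short interval, contributing a factor of at most~$4$ exactly as in your argument. Multiplying these two explicit constants is what produces $40$. The degree-$2$ case reduces to $q=a_0\pm r$ directly. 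So the decisive input is a sharp Thue-equation bound for cubic forms, not the Evertse--Győry unit-equation machinery, and the constant $40$ is the product of two small explicit bounds rather than the output of a classification of cubic polynomials.
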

	They cannot find an example of $r$ (of degree 3) that gives more than 7 Pisot numbers $q$ satisfying $\Z[q]=\Z[r]$, and ask for an improvement to
	the bound 40. Unfortunately, the bound in Corollary~\ref{cor:2 questions}
	when $\cO=\Z$ and $d=3$ is \emph{much larger} than 40. The proof of
	Theorem~\ref{thm:40} uses results on cubic Thue equations $F(x,y)=1$
	by Bennett \cite[Theorem~1.4]{Bennett2001}. 
	
	\subsection{Another approach to Theorem~\ref{thm:main 2}} 
	In \cite[Theorem~1.1]{BellHare}, 
	Bell and Hare actually study the
	equation $\disc_{\Q}(q^n)=\disc_{\Q}(r^n)$. Using Definition~\ref{def:discriminant}, they expand both sides to
	conclude that a certain linear recurrence sequence vanishes at $n$. Their
	definition of being ``full rank'' mentioned at the beginning of this paper 
	makes it relatively easy to study the degeneracy of the resulting linear recurrence sequence.
	
	On the other hand, we can ask the problem of describing all $(m,n)$
	such that $\displaystyle\frac{\disc_K(q^m)}{\disc_K(r^n)}\in\cO^*$. Again,
	we can use Definition~\ref{def:discriminant} to 
	expand $\disc_K(q^m)$ and $\disc_K(r^n)$ and get a unit equation, then 
	Theorem~\ref{thm:ESS} provides
	a bound on the number of nondegenerate solutions. However, there are two 
	issues. First, it does not seem entirely obvious how to get the exact relation
	(such as the relations described in the sets $\cA_{\cO,q,r}$, $\cB_{\cO,q,r}$,
	and $\cC_{\cO,q,r}$) from ``too many'' degenerate solutions. Second, by studying
  the property 
  $\displaystyle\frac{\disc_K(q^m)}{\disc_K(r^n)}\in\cO^*$ alone, we can never 
  rule out the case, say,  
	$q^m=u\sigma(r^n)$ for some conjugate $\sigma(r^n)$ of $r^n$ and some $u\in\cO^*$.
	On the other hand, Theorem~\ref{thm:main 2} indicates that 
	(except finitely many $(m,n)$) the case $q^m=u\sigma(r^n)$ (with $\sigma(r^n)\neq r^n$) can only happen when
	$q^m$ and $r^n$ have degree 2 over $K$.

	\bibliographystyle{amsalpha}
	\bibliography{Pisot-Revision1} 	

\end{document}